\newtheorem*{theoA}{Theorem A}
\newtheorem*{theoB}{Theorem B}
\newtheorem*{theoC}{Theorem C}
\newtheorem*{theoD}{Theorem D}
\newtheorem*{theoE}{Theorem E}
\newtheorem*{theoF}{Theorem F}
\newtheorem*{theoG}{Theorem G}
\newtheorem*{theoH}{Theorem H}
\newtheorem*{theoI}{Theorem I}
\newtheorem{theo}{Theorem}[section]
\newtheorem{lem}{Lemma}[section]
\newtheorem{exm}{Example}[section]
\newtheorem{defi}{Definition}[section]
\newtheorem{rem}{Remark}[section]
\newtheorem{ques}{Question}[section]
\newcommand{\ol}{\overline}
\newcommand{\be}{\begin{equation}}
\newcommand{\ee}{\end{equation}}
\newcommand{\beas}{\begin{eqnarray*}}
\newcommand{\eeas}{\end{eqnarray*}}
\newcommand{\bea}{\begin{eqnarray}}
\newcommand{\eea}{\end{eqnarray}}
\newcommand{\lra}{\longrightarrow}
\numberwithin{equation}{section}
\begin{document}
\title[ On the Generalizations Of Br\"{U}ck Conjecture]{On the Generalizations Of Br\"{U}ck Conjecture}
\date{}
\author[A. Banerjee and B. Chakraborty ]{ Abhijit Banerjee * and Bikash Chakraborty }
\date{}
\address{ Department of Mathematics, University of Kalyani, West Bengal 741235, India.}
\email{abanerjee\_kal@yahoo.co.in, abanerjee\_kal@rediffmail.com, abanerjeekal@gmail.com.}
\email{bikashchakraborty.math@yahoo.com, bikashchakrabortyy@gmail.com.}
\maketitle
\let\thefootnote\relax
\footnotetext{2000 Mathematics Subject Classification: 30D35.}
\footnotetext{Key words and phrases: Meromorphic function, derivative, small function.}
\footnotetext{Type set by \AmS -\LaTeX}
\setcounter{footnote}{0}
\begin{abstract} We obtain similar types of conclusions as that of Br\"{u}ck \cite{3} for two differential polynomials which in turn radically improve and generalize several existing results.
Moreover a number of examples have been exhibited to justify the necessity or sharpness of some conditions used in the paper. At last we pose an open problem for future research.\end{abstract}
\section{Introduction Definitions and Results}
Let $f$ and $g$  be two non constant meromorphic functions in the open complex plane $\mathbb{C}$.

If for some $a\in\mathbb{C}\cup\{\infty\}$, $f$ and $g$ have same set of $a$-points with the same multiplicities, we say that $f$ and $g$ share the value $a$ CM (counting multiplicities) and if we do not consider the multiplicities then $f$, $g$ are said to share the value $a$ IM (ignoring multiplicities).
 When $a=\infty$ the zeros of $f-a$ means the poles of $f$. Let $m$ be a positive integer or infinity and $a\in\mathbb{C}\cup\{\infty\}$.
We denote by $E_{m)}(a;f)$ the set of all $a$-points of $f$ with multiplicities not exceeding $m$, where an $a$-point is counted according to its multiplicity.
Also we denote by $\ol E_{m)}(a;f)$ the set of distinct $a$-points of $f(z)$ with multiplicities not greater than $m$.
If for some $a\in\mathbb{C}\cup\{\infty\}$, $E_{m)}(a,f)=E_{m)}(a,g)$ ($\ol E_{m)}(a,f)=\ol E_{m}(a,g)$) holds for $m=\infty$ we say that $f$, $g$ share the value $a$ CM (IM).

It will be convenient to let $E$ denote any set of positive real numbers of finite linear measure, not necessarily the same at each occurrence.
For any non-constant meromorphic function $f$, we denote by $S(r, f)$ any quantity satisfying $$S(r, f) = o(T(r, f))\;\;\;\;\;\;\;\;\;\;\ (r\lra \infty, r\not\in E).$$

A meromorphic function $a(\not\equiv \infty)$ is called a small function with respect to $f$ provided that $T(r,a)=S(r,f)$ as $r\lra \infty, r\not\in E$.
If $a=a(z)$ is a small function we define that $f$ and $g$ share $a$ IM or $a$ CM according as $f-a$ and $g-a$ share $0$ IM or $0$ CM respectively.

We use $I$ to denote any set of infinite linear measure of $0<r<\infty$.\\
Also it is known to us that the hyper order $\rho _{2}(f)$ of $f(z)$ is defined by \beas \rho _{2}(f)=\limsup\limits_{r\lra\infty}\frac{ \log \log T(r,f)}{\log r}.\eeas
Nevanlinna's uniqueness theorem shows that two meromorphic functions $f$ and $g$ share $5$ values IM are identical. Rubel and Yang \cite{10a} first showed for entire functions that in the special situation where $g$ is the derivative of $f$, one usually needs sharing of only two values CM for their uniqueness.
 Two years later, Mues and Steinmetz \cite{10} proved that actually in the above case one does not even need the multiplicities. They proved the following  result :
\begin {theoA}\cite{10} Let $f$ be a non-constant entire function. If $f$ and $f^{'}$ share two distinct values $a$, $b$ IM then $f^{'}\equiv f$.\end{theoA}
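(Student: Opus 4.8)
The plan is to argue by contradiction, so suppose $f'\not\equiv f$. Since $f$ is entire it has no poles, whence $\ol N(r,f)=0$, and Nevanlinna's second fundamental theorem applied to $f$ with the three values $a$, $b$, $\infty$ yields
\be\label{smt} T(r,f)\le \ol N\Big(r,\tfrac{1}{f-a}\Big)+\ol N\Big(r,\tfrac{1}{f-b}\Big)+S(r,f). \ee
The entire function $g:=f'-f$ is the central object: because $f$ and $f'$ share $a$ and $b$ IM, at every $a$-point of $f$ one has $f=a$ and $f'=a$, and at every $b$-point $f=b$ and $f'=b$, so $g$ vanishes at all of these points. The whole difficulty is thereby transferred to showing $g\equiv 0$.

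First I would carry out the local multiplicity analysis at the shared points, which I expect to be the technical heart of the argument. If $v\in\{a,b\}$ is a nonzero shared value and $z_0$ is a $v$-point of $f$ of multiplicity $p$, then $f'(z_0)=v\neq 0$ forces $p=1$ (a multiple $v$-point would make $f'$ vanish at $z_0$); moreover the leading coefficient of $f-v$ at $z_0$ then equals $v$. Consequently the Br\"uck-type quotients
\be \alpha:=\frac{f'-a}{f-a},\qquad \beta:=\frac{f'-b}{f-b} \ee
extend across the shared points and define \emph{entire} functions (the apparent poles at the $a$- and $b$-points cancel), with $T(r,\alpha),T(r,\beta)\le T(r,f)+S(r,f)$ by the lemma on the logarithmic derivative. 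Writing $f'=\alpha(f-a)+a$ and $f'=\beta(f-b)+b$ and eliminating $f'$ gives $(\alpha-\beta)f=a(\alpha-1)-b(\beta-1)$.

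The clean half of the resulting dichotomy is immediate: if $\alpha\equiv\beta$, the last identity collapses to $(a-b)(\alpha-1)\equiv 0$, so $\alpha\equiv 1$ because $a\neq b$, i.e. $f'-a\equiv f-a$ and hence $f'\equiv f$, the desired conclusion. It remains to rule out $\alpha\not\equiv\beta$, and this is where I expect the main obstacle to lie. In that case $f=\big(a(\alpha-1)-b(\beta-1)\big)/(\alpha-\beta)$ represents $f$ rationally in the two controlled-growth entire functions $\alpha,\beta$, whose $1$-points are tied to the $b$- and $a$-points of $f$ respectively; feeding these relations back and comparing growth through the second fundamental theorem should force so many $1$-points of $\alpha$ and $\beta$ (equivalently, make the defect relations so tight) that the estimate \eqref{smt} is violated, contradicting $\alpha\not\equiv\beta$. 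Finally, the degenerate case in which one shared value equals $0$ escapes the simplicity analysis above---there a shared zero of $f$ has multiplicity $\ge 2$ and the corresponding quotient $f'/f$ is merely meromorphic with simple poles---so it must be treated separately, applying the same elimination to $\alpha$ and $\beta:=f'/f$, which again returns the implication $\alpha\equiv\beta\Rightarrow f'\equiv f$.
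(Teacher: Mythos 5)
First, a point of comparison: the paper itself contains no proof of this statement. Theorem A is quoted as background from Mues--Steinmetz \cite{10}, so there is no internal argument to measure yours against; your attempt must stand against the known proofs of that theorem, and it does not.

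Everything you establish is the routine half of the argument: the simplicity of the shared points when the shared value is nonzero, the fact that $\alpha=\frac{f'-a}{f-a}$ and $\beta=\frac{f'-b}{f-b}$ are entire with $T(r,\alpha),T(r,\beta)\leq T(r,f)+S(r,f)$, the elimination identity $(\alpha-\beta)f=a(\alpha-1)-b(\beta-1)$, and the implication $\alpha\equiv\beta\Rightarrow f'\equiv f$ are all correct but easy. The entire content of the Mues--Steinmetz theorem is the exclusion of the case $\alpha\not\equiv\beta$, and there you offer only the hope that growth comparison via the second fundamental theorem ``should force'' a contradiction. With the estimates you actually have, it cannot. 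From $f=\frac{a(\alpha-1)-b(\beta-1)}{\alpha-\beta}$ and $T(r,\alpha)+T(r,\beta)\leq T(r,f)+S(r,f)$ (the best your logarithmic-derivative bound yields, even after the refinement $T(r,\alpha)\leq T(r,f)-N(r,a;f)+S(r,f)$ and its analogue for $\beta$), one gets only $T(r,f)\leq 2\left(T(r,\alpha)+T(r,\beta)\right)+O(1)\leq 2T(r,f)+S(r,f)$, which is vacuous. Similarly, counting $1$-points --- every $b$-point of $f$ is a $1$-point of $\alpha$ and every $a$-point of $f$ is a $1$-point of $\beta$ --- and applying the second fundamental theorem gives $T(r,f)\leq \ol N(r,a;f)+\ol N(r,b;f)+S(r,f)\leq T(r,\alpha)+T(r,\beta)+S(r,f)$, which reduces to $N(r,a;f)+N(r,b;f)\leq T(r,f)+S(r,f)$; this is exactly what the sharing hypothesis already forces, so all of your inequalities are mutually consistent and no contradiction can emerge at this level. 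The genuine proofs (Mues--Steinmetz \cite{10}, and Gundersen's later work) require substantially more: the analysis of the equality case in the second fundamental theorem (e.g.\ that $m\left(r,\frac{1}{f-f'}\right)=S(r,f)$ and that $f-f'$ has essentially no zeros other than simple ones at the shared points), auxiliary functions built from the second derivative such as $\frac{f''(f-f')}{(f'-a)(f'-b)}$, and a delicate case analysis. None of that machinery, nor any substitute for it, appears in your sketch. The same objection applies, a fortiori, to the case where one shared value equals $0$, which you defer ``to be treated separately'' but never treat: there the elimination gives $(\alpha-\beta)f=a(\alpha-1)$ with $\beta=f'/f$ meromorphic, and again only the trivial branch $\alpha\equiv\beta$ is resolved.
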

Subsequently, there were more generalizations with respect to higher derivatives as well.

Natural question would be to investigate the relation between an entire function and its derivative counterpart for one CM shared value. In 1996, in this direction the following famous conjecture was proposed by Br\"{u}ck \cite{3}:\\
{\it {\bf Conjecture:} Let $f$ be a non-constant entire function such that the hyper order $\rho _{2}(f)$ of $f$ is not a positive integer or infinite. If $f$ and $f^{'}$ share a finite value $a$ CM, then $\frac{f^{'}-a}{f-a}=c$, where $c$ is a non zero constant.} \par
Br\"{u}ck himself proved the conjecture for $a=0$. For $a\not=0$, Br\"{u}ck \cite{3} showed that under the assumption $N(r,0;f^{'})=S(r,f)$ the conjecture was true without any growth condition when $a=1$.
\begin {theoB}\cite{3} Let $f$ be a non-constant entire function. If $f$ and $f^{'}$ share the value $1$ CM and if $N(r,0;f^{'})=S(r,f)$ then $\frac{f^{'}-1}{f-1}$ is a nonzero constant.\end {theoB}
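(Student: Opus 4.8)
The plan is to exploit the CM sharing to produce an exponential factor and then to show that the hypothesis $N(r,0;f')=S(r,f)$ forces every auxiliary logarithmic derivative to be a small function, so that the exponential factor is pinned down to a constant. First I would note that, since $f$ is entire and $f,f'$ share the value $1$ CM, the zeros of $f-1$ and of $f'-1$ coincide with the same multiplicities; hence $g:=\frac{f'-1}{f-1}$ is an entire function without zeros, so $g=e^{h}$ for some entire $h$ and $f'-1=e^{h}(f-1)$. The whole problem then reduces to proving $h'\equiv 0$.

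Next I would record two smallness facts. Because $N(r,0;f')=S(r,f)$, the function $\beta:=f''/f'$ satisfies $m(r,\beta)=S(r,f)$ by the lemma of the logarithmic derivative, while $N(r,\beta)\le \ol N(r,0;f')=S(r,f)$; hence $T(r,\beta)=S(r,f)$. Likewise, writing $\phi:=\frac{g'}{g}=h'=\frac{f''}{f'-1}-\frac{f'}{f-1}$, the same lemma gives $m(r,\phi)=S(r,f)$, and since $g$ is entire and zero-free, $\phi$ is entire, so $T(r,\phi)=S(r,f)$. Thus both $\beta$ and $\phi$ are small functions of $f$.

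Now I would differentiate $f'-1=e^{h}(f-1)$ and substitute $f''=\beta f'$ to obtain, after rearrangement, the identity
\[
f'=\frac{\phi}{g+\phi-\beta}.
\]
Assume, toward a contradiction, that $\phi\not\equiv 0$; then $h$ is non-constant and $g=e^{h}$ is a (transcendental) non-constant entire function. Since $f'$ is entire, every zero of the denominator $g+\phi-\beta$ must be a zero of $\phi$, so $\ol N(r,\beta-\phi;g)=\ol N(r,0;g+\phi-\beta)\le T(r,\phi)=S(r,f)$. Applying the second main theorem to $g$ with the three small targets $0,\infty$ and the small function $\beta-\phi$, and using that $g$ omits both $0$ and $\infty$, I get $T(r,g)\le \ol N(r,\beta-\phi;g)+S(r,g)$, whence $(1-o(1))\,T(r,g)\le S(r,f)$ and therefore $T(r,g)=S(r,f)$.

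Finally, with $g,\phi,\beta$ all small, the displayed identity forces $T(r,f')=S(r,f)$; then $gf=f'-1+g$ gives $T(r,gf)=S(r,f)$, and writing $f=(gf)\cdot g^{-1}$ yields $T(r,f)=S(r,f)$, which is impossible for the non-constant $f$. Hence $\phi\equiv 0$, so $h$ is constant and $\frac{f'-1}{f-1}=g$ is a nonzero constant. I expect the decisive and most delicate step to be the value-distribution estimate for $g$: extracting $T(r,g)=S(r,f)$ from the second main theorem with a moving (small) target, while keeping the two families of error terms $S(r,g)$ and $S(r,f)$ correctly matched.
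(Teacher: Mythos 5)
The paper itself gives no proof of Theorem B --- it is quoted verbatim from Br\"{u}ck's paper \cite{3} --- so your proposal can only be judged on its own merits. Most of it is sound and follows the classical line: since $f$ is entire and $f,f'$ share $1$ CM, $g=\frac{f'-1}{f-1}=e^{h}$ is entire and zero-free; $\beta=f''/f'$ and $\phi=g'/g=h'$ are small with respect to $f$ (the hypothesis $N(r,0;f')=S(r,f)$ enters exactly where it should, to kill $N(r,\infty;\beta)$); the identity $f'(g+\phi-\beta)=\phi$ is correct; and the closing chain $T(r,g)=S(r,f)\Rightarrow T(r,f')=S(r,f)\Rightarrow T(r,f)=S(r,f)$, absurd, is fine.

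The gap is precisely at the step you yourself flag as delicate, and as written it is genuine. The second main theorem with three small targets requires the targets to be small with respect to the function to which it is applied: you would need $T(r,\beta-\phi)=o(T(r,g))$, whereas what you know is $T(r,\beta-\phi)=S(r,f)$, a different statement. Worse, the conclusion you are driving at, $T(r,g)\le S(r,f)$, describes exactly the regime in which $\beta-\phi$ has no reason to be small relative to $g$; so the hypothesis of the theorem you invoke may fail precisely when its conclusion is needed, and the argument is circular as stated. The standard repair: put $\alpha:=\beta-\phi$ and, assuming $\alpha\not\equiv 0$, apply the second main theorem to $G:=g/\alpha$ with the \emph{constant} targets $0,1,\infty$ (if $G$ is constant, then $T(r,g)\le T(r,\alpha)+O(1)=S(r,f)$ at once). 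Since $g$ omits $0$ and $\infty$, zeros of $G$ arise only from poles of $\alpha$, poles of $G$ only from zeros of $\alpha$, and $1$-points of $G$ only from zeros of $g-\alpha$, so all three reduced counting functions are $S(r,f)$; hence $T(r,G)\le S(r,f)+S(r,G)$, and since $S(r,G)=o(T(r,G))$ this self-improves to $T(r,G)=S(r,f)$, whence $T(r,g)\le T(r,G)+T(r,\alpha)+O(1)=S(r,f)$. This decoupling of the two error-term families is what your version is missing. Separately, you never treat the degenerate case $\beta\equiv\phi$, where your third target coincides with the target $0$ and the second main theorem yields nothing ($T(r,g)\le \ol N(r,0;g)+S(r,g)$ would read $T(r,g)\le S(r,g)$, which is false). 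That case needs its own short argument: there $f'=\phi e^{-h}=-(e^{-h})'$, so $f=-e^{-h}+c$; substituting into $f'-1=e^{h}(f-1)$ gives $h'=(c-1)e^{2h}$, and either $c=1$, forcing $\phi=h'\equiv 0$ (contradicting the assumption), or $(e^{-2h})'=-2(c-1)$, making $e^{-2h}$ a non-constant linear polynomial, which contradicts its zero-freeness. With these two repairs your proof is complete.
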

Following example shows the fact that one can not simply replace the value $1$ by a small function $a(z)(\not\equiv 0,\infty)$.
\begin{exm}\label{ex1.1} Let $f=1+{e^{e^{z}}}$ and $a(z)=\frac{1}{1-e^{-z}}$. \end{exm} By {\it Lemma 2.6} of \cite {4} [p. 50] we know that $a$ is a small function of $f$. Also it can be easily seen that $f$ and $f^{'}$ share $a$ CM and $N(r,0;f^{'})=0$ but $f-a\not=c\;(f^{'}-a)$ for every nonzero constant $c$. We note that $f-a=e^{-z}\;(f^{'}-a)$.
So in this case additional suppositions are required. \par
However for entire function of finite order, Yang \cite{11} removed the supposition $N(r,0;f^{'})=0$ and obtained the following result.
\begin {theoC}\cite{11} Let $f$ be a non-constant entire function of finite order and let $a(\not=0)$ be a finite constant. If $f$, $f^{(k)}$ share the value $a$ CM then $\frac{f^{(k)}-a}{f-a}$ is a nonzero constant, where $k(\geq 1)$ is an integer.\end{theoC}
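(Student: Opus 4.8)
The plan is to convert the sharing hypothesis into a differential identity and then to force the resulting exponential factor to be constant by exploiting finite order.

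\emph{Reduction.} First I would note that $f$ must be transcendental: if $f$ were a nonconstant polynomial, then for $k\ge 1$ and $a\ne 0$ the polynomials $f-a$ and $f^{(k)}-a$ have different numbers of zeros counted with multiplicity, so they cannot have identical zero sets and hence cannot share $a$ $CM$. So let $f$ be transcendental of finite order $\rho$. Since $f$ is entire and $f,f^{(k)}$ share $a$ $CM$, every zero of $f-a$ is a zero of $f^{(k)}-a$ of the same multiplicity and conversely; therefore
\[ H:=\frac{f^{(k)}-a}{f-a} \]
is entire and free of zeros. Being a quotient of finite-order entire functions with no poles, $H$ has finite order, so Hadamard's factorization theorem gives $H=e^{Q}$ with $Q$ a polynomial, $\deg Q\le\rho$. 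Thus the theorem is reduced to showing that
\[ f^{(k)}-a=e^{Q}(f-a),\qquad Q\ \text{a polynomial}, \]
forces $Q$ to be constant.

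\emph{First estimates.} Writing $f^{(k)}=(f-a)^{(k)}$ and applying the lemma on the logarithmic derivative to $f-a$, one gets $m\big(r,(f-a)^{(k)}/(f-a)\big)=S(r,f)$, and since $e^{Q}=(f-a)^{(k)}/(f-a)-a/(f-a)$,
\[ T(r,e^{Q})=m(r,e^{Q})\le m\!\left(r,\frac{1}{f-a}\right)+S(r,f)=T(r,f)-N(r,a;f)+S(r,f). \]
This controls the growth of $e^{Q}$, and comparing the orders of the two sides of $f^{(k)}-a=e^{Q}(f-a)$ reconfirms $\deg Q\le\rho$. I would also record the local fact, used below, that because $a\ne 0$ no $a$-point of $f$ has multiplicity greater than $k$: at an $a$-point of multiplicity $p>k$ one has $f^{(k)}=0\ne a$, so $f^{(k)}-a$ would not vanish there, contradicting the $CM$-sharing. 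None of this yet forces $\deg Q=0$.

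\emph{The crux.} The main obstacle is to upgrade $\deg Q\le\rho$ to $\deg Q=0$; this is exactly where finite order is indispensable (indeed the conclusion fails without a growth restriction, as the role of the hyper-order in Br\"uck's conjecture already indicates). Suppose $n:=\deg Q\ge 1$. Then $e^{Q}$ has order $n$, and the plane divides into $n$ sectors in which $\mathrm{Re}\,Q(z)\to+\infty$ and $n$ in which it $\to-\infty$, like $\pm c|z|^{n}$. Heuristically, on the sectors where $\mathrm{Re}\,Q\to+\infty$ the identity $(f-a)^{(k)}/(f-a)=e^{Q}+a/(f-a)$ makes $\big|(f-a)^{(k)}/(f-a)\big|$ grow like $e^{\mathrm{Re}\,Q}$ unless $f-a$ is exponentially small, whereas Gundersen's sharp pointwise bounds for logarithmic derivatives of a finite-order function keep $\big|(f-a)^{(k)}/(f-a)\big|$ of polynomial size off an exceptional set of directions; reconciling the two forces $f-a\approx -a\,e^{-Q}$ there, and a matching analysis governs the decaying sectors. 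I would make this precise by differentiating $f^{(k)}-a=e^{Q}(f-a)$ once to remove the constant, obtaining that $g:=f-a$ satisfies the homogeneous linear differential equation
\[ g^{(k+1)}-e^{Q}g'-Q'e^{Q}g=0, \]
whose coefficients are entire of order $n$, and then invoking the theory of growth of solutions of linear differential equations with coefficients of exponential type: when $Q$ is nonconstant such an equation admits no entire solution of finite order (the integrating-factor solutions grow like $e^{\pm\int e^{Q}}$, i.e. doubly exponentially). This contradicts $\rho(f)<\infty$, so $n=0$, $e^{Q}$ is a nonzero constant, and $(f^{(k)}-a)/(f-a)$ is a nonzero constant. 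I expect the genuinely delicate point to be the simultaneous control of $f$ in the sectors where $e^{Q}$ explodes and those where it decays, which is precisely the work that the finite-order hypothesis and the growth-of-solutions machinery are there to do.
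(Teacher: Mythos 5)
First, a point of reference: this paper never proves Theorem C --- it is quoted verbatim from Yang \cite{11} (``Solution of a differential equation and its applications''), so your proposal can only be compared with Yang's argument. Your reduction is exactly the standard (and Yang's) one: dispose of polynomials, use CM sharing plus Hadamard factorization to write $f^{(k)}-a=e^{Q}(f-a)$ with $Q$ a polynomial of degree at most $\rho(f)$, and then show that a nonconstant $Q$ is incompatible with finite order. Up to that point your write-up is sound.

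The gap is at the decisive step. You pass to the homogeneous equation $g^{(k+1)}-e^{Q}g'-Q'e^{Q}g=0$ and then ``invoke the theory of growth of solutions of linear differential equations with coefficients of exponential type'' to conclude it has no nontrivial entire solution of finite order. No citable general theorem delivers this, and the ones that exist do not apply. Frei's theorem, applied to your equation (whose highest-indexed transcendental coefficient is the one attached to $g'$), permits one linearly independent finite-order solution, so it cannot finish the proof. The order-comparison results (every solution has infinite order when $\rho(A_{j})<\rho(A_{0})$ for $j\geq 1$, or Gundersen-type ray-domination criteria) fail because your two transcendental coefficients $-e^{Q}$ and $-Q'e^{Q}$ have the same order and comparable modulus, up to a polynomial factor, along every ray. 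Worse, the general principle you appeal to is simply false for this class of equations: the equation $g''+ze^{Q}g'-e^{Q}g=0$ has the finite-order solution $g(z)=z$, even though both coefficients are of exponential type with $Q$ an arbitrary nonconstant polynomial. Hence any correct proof must exploit the specific relation between your coefficients (the $g$-coefficient is exactly $Q'$ times the $g'$-coefficient), i.e.\ the inhomogeneous structure $g^{(k)}-e^{Q}g=a$ with $a\neq 0$; note that differentiating buys you nothing, since integrating back only returns $g^{(k)}-e^{Q}g=c$ with $c$ an arbitrary constant, which contains the original problem. Proving that \emph{this} equation has no nonconstant finite-order entire solution when $Q$ is nonconstant is precisely the content of Yang's paper, done via Gundersen's pointwise logarithmic-derivative estimates together with a Phragm\'{e}n--Lindel\"{o}f analysis of $\mathrm{Re}\,Q$ in sectors --- exactly the part you describe heuristically and then defer to machinery. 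That deferred part is the theorem; it cannot be outsourced to a black box.
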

{\it Theorem C} may be considered as a solution to the Br\"{u}ck conjecture.
Next we consider the following examples which show that in {\it Theorem B} one can not simultaneously replace ``CM" by ``IM" and ``entire function" by ``meromorphic function".
\begin{exm}\label{ex1.2}$f(z)=1+tanz$. \end{exm}
Clearly $f(z)-1=tanz$ and $f^{'}(z)-1=tan^{2}z$ share $1$ IM and $N(r,0;f^{'})=0$.
\begin{exm}\label{ex1.3} $f(z)=\frac{2}{1-e^{-2z}}$. \end{exm}
Clearly $f^{'}(z)=-\frac{4e^{-2z}}{(1-e^{-2z})^{2}}$. Here $f-1=\frac{1+e^{-2z}}{1-e^{-2z}}$ and $f{'}-1=-\frac{(1+e^{-2z})^{2}}{(1-e^{-2z})^{2}}$.\\
Here $N(r,0;f^{'})=0$ So in both the examples we see that the conclusion of {\it Theorem B} ceases to hold.\par
From the above discussion it is natural to ask the following question.
\begin{ques} Can the conclusion of {\em Theorem B} be obtained for a non-constant meromorphic function sharing a small function IM together with its $k$-th derivative counterpart? \end{ques}
Zhang \cite{14} extended {\it Theorem B} to meromorphic function and also studied the CM value sharing of a meromorphic function with its $k$-th derivative.\par
Meanwhile a new notion of scalings between CM and IM known as weighted sharing (\cite{6}), appeared in the uniqueness literature.

In 2004, Lahiri-Sarkar \cite{8} employed weighted value sharing method to improve the results of Zhang \cite{14}.
In 2005, Zhang \cite{15} further extended the results of Lahiri-Sarkar to a small function and proved the following result for IM sharing.
\begin{theoD}\cite{15} Let $f$ be a non-constant meromorphic function and $k(\geq 1)$ be integer. Also let $a\equiv a(z)$ ($\not\equiv 0,\infty$) be a meromorphic small function. Suppose that $f-a$ and $f^{(k)}-a$ share $0$ IM.
If \be\label{e1.1}4\ol N(r,\infty;f)+3N_{2}\left(r,0;f^{(k)}\right)+2\ol N\left(r,0;(f/a)^{'}\right)<(\lambda +o(1))\;T\left(r,f^{(k)}\right)\ee for $r\in I$, where $0<\lambda <1$ then $\frac{f^{(k)}-a}{f-a}=c$  for some constant $c\in\mathbb{C}/\{0\}$.\end{theoD}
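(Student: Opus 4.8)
The plan is to prove that the logarithmic-derivative difference
\[
\varphi=\frac{(f^{(k)}-a)'}{f^{(k)}-a}-\frac{(f-a)'}{f-a}
\]
vanishes identically; once $\varphi\equiv 0$, integration gives $\log\frac{f^{(k)}-a}{f-a}=\text{const}$, i.e.\ $\frac{f^{(k)}-a}{f-a}=c$ for some $c\in\mathbb{C}\setminus\{0\}$, which is exactly the conclusion. By the lemma on the logarithmic derivative, together with $T(r,a)=S(r,f)$ and the standard comparability $S(r,f^{(k)})=S(r,f)$, one gets $m(r,\varphi)=S(r,f)$, whence $T(r,\varphi)=N(r,\varphi)+S(r,f)$. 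I would then assume $\varphi\not\equiv 0$ and derive a contradiction with \eqref{e1.1}.

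The first reduction is a Second Main Theorem for $f^{(k)}$ against the three (small) targets $0,a,\infty$, giving
\[
T(r,f^{(k)})\le \ol N(r,0;f^{(k)})+\ol N(r,a;f^{(k)})+\ol N(r,\infty;f^{(k)})+S(r,f).
\]
Since $f-a$ and $f^{(k)}-a$ share $0$ IM, I replace $\ol N(r,a;f^{(k)})=\ol N(r,0;f^{(k)}-a)$ by $\ol N(r,0;f-a)$, while $\ol N(r,\infty;f^{(k)})=\ol N(r,\infty;f)$ and $\ol N(r,0;f^{(k)})\le N_{2}(r,0;f^{(k)})$. The entire problem is thereby reduced to estimating the shared-zero term $\ol N(r,0;f-a)$ by the three quantities appearing on the right of \eqref{e1.1}.

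To bound $\ol N(r,0;f-a)$ I would pass to $F=f/a$ and $G=f^{(k)}/a$, which share the value $1$ IM up to an $S(r,f)$ error arising from the zeros and poles of $a$, and introduce the Lahiri-type function
\[
H=\left(\frac{F''}{F'}-\frac{2F'}{F-1}\right)-\left(\frac{G''}{G'}-\frac{2G'}{G-1}\right).
\]
A direct local computation shows that every common \emph{simple} zero of $f-a$ and $f^{(k)}-a$ is a zero of $H$; hence, if $H\not\equiv0$, the number of such points is at most $N(r,0;H)\le T(r,H)=N(r,\infty;H)+S(r,f)$. The poles of $H$ are simple and occur only at the poles of $f$, at the zeros of $(f/a)'$ (equivalently, the multiple zeros of $f-a$), and at the multiple zeros of $f^{(k)}$, so $N(r,\infty;H)$ is dominated by $\ol N(r,\infty;f)+\ol N(r,0;(f/a)')+N_{2}(r,0;f^{(k)})$ up to $S(r,f)$; the remaining non-simple common zeros are absorbed directly into $\ol N(r,0;(f/a)')$ and $N_{2}(r,0;f^{(k)})$. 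The degenerate case $H\equiv0$ is handled separately: integrating twice yields a M\"obius relation between $F$ and $G$, and the sharing together with the counting hypothesis again forces the desired linear relation.

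Collecting these estimates produces an inequality of the form $T(r,f^{(k)})\le 4\ol N(r,\infty;f)+3N_{2}(r,0;f^{(k)})+2\ol N(r,0;(f/a)')+S(r,f)$, where the coefficients $4,3,2$ serve as a convenient over-estimate of the precise multiplicity count. Feeding this into \eqref{e1.1} gives $(1-\lambda-o(1))\,T(r,f^{(k)})\le S(r,f)$ for $r\in I$, which is impossible because $\lambda<1$ and $T(r,f^{(k)})\lra\infty$; hence $\varphi\equiv0$, as required. The main obstacle is precisely this final bookkeeping of multiplicities: one must track the exact pole orders of $H$ at the poles of $f$ and the exact contribution of the multiple common zeros so that the totals stay within the coefficients $4,3,2$, while simultaneously verifying that every zero or pole of the small function $a$ contributes only $S(r,f)$ — it is this last verification that makes the passage $f,f^{(k)}\rightsquigarrow F,G$ and back the delicate part of the argument.
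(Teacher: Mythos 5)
Your skeleton --- pass to $F=f/a$, $G=f^{(k)}/a$, introduce the auxiliary function $H$, apply the second main theorem to $G$, and split into $H\not\equiv 0$ (contradiction with \eqref{e1.1}) versus $H\equiv 0$ --- is the same as the paper's, whose Theorem \ref{t1} contains Theorem D as the special case $P[f]=f$, $Q[f]=f^{(k)}$. The genuine gap is your one-sentence treatment of $H\equiv 0$. Integration gives the M\"obius relation $\frac{1}{F-1}=\frac{C}{G-1}+D$, and the conclusion of Theorem D is precisely the assertion $D=0$ (note that $H\equiv 0$ alone does not give your $\varphi\equiv 0$); your claim that ``the sharing together with the counting hypothesis again forces the desired linear relation'' is not correct as stated, because the case $D\neq 0$ splits into two subcases of very different nature. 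When $C/D\neq 1$, the relation makes $1-C/D$ an almost-omitted value of $G$ (its preimages are poles of $F$, and the relation itself first forces $\ol N(r,\infty;f)=S(r,f)$), so the second main theorem yields $T(r,G)=N_{2}(r,0;G)+S(r,f)$, contradicting \eqref{e1.1}; here the counting hypothesis is indeed what is used. But when $C/D=1$, the relation becomes $\left(F-1-\tfrac{1}{C}\right)G\equiv -\tfrac{1}{C}$, i.e. $\left(f-(1+\tfrac{1}{C})a\right)f^{(k)}\equiv -a^{2}/C$, and hypothesis \eqref{e1.1} does \emph{not} rule this out. What kills it is an unconditional argument of a completely different flavour: from the relation one gets $\ol N(r,\infty;f)=S(r,f)$ and that zeros of $f^{(k)}$ (hence zeros of $f$ of order $>k$) are $S(r,f)$; one then rewrites it as $\frac{1}{f\left(f-(1+1/C)a\right)}\equiv -\frac{C}{a^{2}}\,\frac{f^{(k)}}{f}$ and compares characteristics: the left side has characteristic $2T(r,f)+S(r,f)$ by Mokhon'ko's lemma (Lemma \ref{l2.3}), while the right side is at most $T(r,f)+S(r,f)$ by the logarithmic-derivative estimate plus the pole count just established (Lemmas \ref{l2.4}, \ref{l2.5}), so $T(r,f)\leq S(r,f)$, which is absurd. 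This is the content of \eqref{e3.14} in the paper; it is a missing idea in your proposal, not deferred bookkeeping.

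A secondary weakness lies in Case 1 itself. You say the non-simple common $1$-points are ``absorbed directly into $\ol N(r,0;(f/a)')$ and $N_{2}(r,0;f^{(k)})$'', but a point where $F-1$ has a simple zero and $G-1$ a multiple zero is a zero of $G'=(f^{(k)}/a)'$, not of $(f/a)'$, and no counting function of $(f^{(k)}/a)'$ appears in \eqref{e1.1}. These points, together with the term $\ol N_{0}(r,0;G')$ in the pole count of $H$, must be converted into the allowed quantities via the Lahiri--Dewan type estimate of Lemma \ref{l2.2} (giving $N(r,0;G'\mid G\neq 0)\leq \ol N(r,\infty;G)+\ol N(r,0;G)+S(r,f)$) and the cancellation against the term $-N_{0}(r,0;G')$ supplied by the second main theorem; this conversion is exactly where the coefficients $4$ and $3$ in \eqref{e1.1} are spent. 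Since you explicitly defer this bookkeeping, Case 1 is also not established as written, although there the ingredients you name are the right ones.
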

We now recall the following two theorems due to Liu and Yang \cite{8b} in the direction of IM sharing related to {\it Theorem B} .
\begin{theoE} \cite{8b} Let $f$ be a non-constant meromorphic function. If $f$ and $f^{'}$ share $1$ IM and if \be\label{e1.2} \ol N(r,\infty;f)+\ol N\left(r,0;f^{'}\right)<(\lambda +o(1))\;T\left(r,f^{'}\right)\ee for $r\in I$, where $0<\lambda <\frac{1}{4}$ then $\frac{f^{'}-1}{f-1}\equiv c$  for some constant $c\in\mathbb{C}/\{0\} $.\end{theoE}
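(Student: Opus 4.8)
The plan is to run the classical auxiliary-function method for IM value sharing, taking $F=f$ and $G=f'$ and studying
\[
H=\left(\frac{f''}{f'}-\frac{2f'}{f-1}\right)-\left(\frac{f'''}{f''}-\frac{2f''}{f'-1}\right).
\]
Two facts drive everything. First, each summand is a logarithmic derivative (of $f'$, of $f-1$, of $f''$ and of $f'-1$), so the lemma on logarithmic derivatives gives $m(r,H)=S(r,f)$, where $S(r,f)=S(r,f')$ throughout by the usual comparison of $T(r,f)$ and $T(r,f')$. Second, a short Taylor expansion shows that at a simple zero $z_0$ of $f-1$ one has $\frac{f''}{f'}-\frac{2f'}{f-1}=-\frac{2}{z-z_0}+O(z-z_0)$ with \emph{vanishing} constant term, and the same holds for the second bracket at a simple zero of $f'-1$; hence at a point where $f-1$ and $f'-1$ both vanish simply the two brackets cancel and $H(z_0)=0$, so the common simple $1$-points are zeros of $H$. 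A local inspection of the remaining points shows that all poles of $H$ are simple and occur only at the poles of $f$, the zeros of $f'$, the zeros of $f''$ away from these, and the $1$-points where $f'-1$ has a multiple zero. Here it is crucial that the IM sharing forces every $1$-point of $f$ to be simple, since a multiple $1$-point of $f$ would be a zero of $f'$ and hence not a $1$-point of $f'$; thus the only irregular $1$-points are those where $f'$ has higher multiplicity than $f$, counted by the reduced function $\ol N_*(r,1)$. I would then split into two cases according to whether $H\equiv 0$.

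If $H\equiv0$, integrating the identity twice gives first $\frac{f'}{(f-1)^2}=A\,\frac{f''}{(f'-1)^2}$ and then a M\"obius relation
\[
\frac{1}{f-1}=\frac{A}{f'-1}+B,\qquad A\neq 0 .
\]
If $B=0$ this is exactly the conclusion $\frac{f'-1}{f-1}\equiv A$. If $B\neq 0$, solving for $f'-1$ shows that $f$ must omit the value $1+1/B$ (otherwise $f'$, and hence $f$, would have a pole at a point where $f$ is finite); feeding this into the second fundamental theorem for $f$ together with \eqref{e1.2} forces $T(r,f)=S(r,f)$, a contradiction. Thus the case $H\equiv0$ settles the theorem.

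If $H\not\equiv0$, the counting estimate is the heart of the matter. Since common simple $1$-points are zeros of $H$, their reduced count satisfies $N^{1)}_E(r)\le N(r,0;H)\le T(r,H)+O(1)=N(r,H)+S(r,f)$, and the pole analysis (the zeros of $f''$ appearing in $N(r,H)$ being absorbed by the ramification term $N_0(r,0;f'')$ of the second fundamental theorem for $f'$) gives $N(r,H)\le \ol N(r,\infty;f)+\ol N(r,0;f')+\ol N_*(r,1)+S(r,f)$. Writing $\ol N(r,1;f')=N^{1)}_E(r)+\ol N_*(r,1)$ and applying the second fundamental theorem to $f'$ for $0,1,\infty$, together with $\ol N(r,\infty;f')=\ol N(r,\infty;f)$, I expect to reach
\[
T(r,f')\le 2\,\ol N(r,\infty;f)+2\,\ol N(r,0;f')+2\,\ol N_*(r,1)+S(r,f') .
\]
The delicate remaining step is to absorb $\ol N_*(r,1)$, raising the coefficient of $\ol N(r,\infty;f)+\ol N(r,0;f')$ to $4$; then \eqref{e1.2} yields $T(r,f')\le(4\lambda+o(1))\,T(r,f')+S(r,f')$ for $r\in I$, which is a contradiction since $4\lambda<1$. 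Hence $H\equiv0$, and the previous case completes the proof.

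The step I expect to be the main obstacle is precisely this last absorption in the case $H\not\equiv0$: tracking, with their correct multiplicities, which poles of $f$, zeros of $f'$ and multiple $1$-points of $f'$ enter $N(r,H)$, and in particular bounding the excess $\ol N_*(r,1)$ (equivalently the gap between $N(r,1;f')$ and $\ol N(r,1;f')$, since multiple $1$-points of $f'$ are zeros of $f''$) in terms of the two counting functions permitted by \eqref{e1.2}. It is exactly this bookkeeping that pins down the numerical threshold $\lambda<\tfrac14$.
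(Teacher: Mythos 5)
A preliminary remark: the paper never proves Theorem E --- it is quoted from Liu--Yang \cite{8b} as background --- so your attempt can only be measured against the paper's proof of its main result, {\it Theorem \ref{t1}}, which runs on exactly the machinery you set up: the same auxiliary function $H$, the same dichotomy $H\not\equiv 0$ versus $H\equiv 0$, the second fundamental theorem, and the Lahiri--Dewan lemma. Your case $H\not\equiv 0$ is correct and matches that template. In particular the ``delicate absorption'' you flag does work exactly as you predict: under IM sharing every $1$-point of $f$ is simple, so $\ol N_{*}(r,1)$ counts only $1$-points where $f'-1$ has a multiple zero; such a point is a zero of $f''$ at which $f'\not=0$, whence {\it Lemma \ref{l2.2}} with $k=1$ applied to $f'$ gives $\ol N_{*}(r,1)\leq N(r,0;f''\mid f'\not=0)\leq \ol N(r,\infty;f)+\ol N(r,0;f')+S(r,f)$; inserted into your intermediate inequality this yields $T(r,f')\leq 4\ol N(r,\infty;f)+4\ol N(r,0;f')+S(r,f')$, which contradicts (\ref{e1.2}) precisely when $\lambda<\frac{1}{4}$.

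The genuine gap is in the case $H\equiv 0$, $B\not=0$. The facts you invoke --- $f$ omits $1+\frac{1}{B}$ (true), the second fundamental theorem, and (\ref{e1.2}) --- cannot by themselves produce a contradiction: the second fundamental theorem for $f$ needs a third value $a_{3}$ besides the omitted values $\infty$ and $1+\frac{1}{B}$, and (\ref{e1.2}) controls only $\ol N(r,\infty;f)$ and $\ol N(r,0;f')$, not $\ol N(r,1;f)$, which is the natural but useless choice of $a_{3}$. (Indeed $f=e^{z}$ is entire, omits a finite value, and satisfies (\ref{e1.2}), so ``entire $+$ one omitted value $+$ (\ref{e1.2})'' is a consistent set of hypotheses; more of the M\"{o}bius relation must be used.) To make (\ref{e1.2}) bite, one must convert zeros of $f'$ into $a_{3}$-points of $f$ via $f'=\frac{1+(A-B)(f-1)}{1-B(f-1)}$, i.e.\ take $a_{3}=1-\frac{1}{A-B}$, which exists only when $A\not=B$; alternatively, as in Case 2 of the paper's proof of {\it Theorem \ref{t1}}, note that $B\not=0$ forbids poles of $f$ altogether, deduce $\ol N\left(r,1-\frac{A}{B};f'\right)\leq\ol N(r,\infty;f)=S(r,f)$, and apply the second fundamental theorem to $f'$, where (\ref{e1.2}) applies directly --- but again only when $A\not=B$. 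The subcase $A=B$ escapes both versions and your sketch does not see it: there the relation collapses to $f'\left(f-1-\frac{1}{B}\right)\equiv-\frac{1}{B}$, so $f'$ omits $0$, $f$ omits $\infty$ and $1+\frac{1}{B}$, and (\ref{e1.2}) is satisfied vacuously, while the relation is incompatible with $\frac{f'-1}{f-1}$ being constant; one must therefore prove that no meromorphic $f$ satisfies it at all. This is exactly the scenario the paper isolates as alternative conclusion (b) of {\it Theorem \ref{t1}} and then has to kill separately (the computation (\ref{e3.14}) under hypothesis (i)). For Theorem E it can be excluded, for instance, by setting $g=f-1-\frac{1}{B}$: then $gg'\equiv-\frac{1}{B}$, so $g^{2}$ is a polynomial of degree one, which is impossible since every zero of the square of a meromorphic function has even multiplicity. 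Without some argument of this kind the case $H\equiv0$ --- and hence the proof --- is incomplete.
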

\begin{theoF} \cite{8b} Let $f$ be a non-constant meromorphic function and $k$ be a positive integer. If $f$ and $f^{(k)}$ share $1$ IM and \be\label{e1.3} (3k+6)\ol N(r,\infty;f)+5N(r,0;f)<(\lambda +o(1))\;T\left(r,f^{(k)}\right)\ee for $r\in I$, where $0<\lambda <1$ then $\frac{f^{(k)}-1}{f-1}\equiv c$  for some constant $c\in\mathbb{C}/\{0\}$.\end{theoF}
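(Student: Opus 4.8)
The plan is to reduce the desired identity $\frac{f^{(k)}-1}{f-1}\equiv c$ to the vanishing of a single auxiliary function and then to play the Second Main Theorem off against the growth hypothesis \eqref{e1.3}. Writing $F=f$ and $G=f^{(k)}$, I would introduce
\be
H=\left(\frac{G''}{G'}-\frac{2G'}{G-1}\right)-\left(\frac{F''}{F'}-\frac{2F'}{F-1}\right).
\ee
The first thing to record is that, by the lemma on the logarithmic derivative, $m(r,H)=S(r,f)$, and that at every \emph{simple} common $1$-point each bracketed expression equals $-\frac{2}{z-z_0}+O(z-z_0)$ (with no constant term), so the singular parts cancel and $H$ vanishes there. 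Thus, as $f$ and $f^{(k)}$ share $1$ IM, the simple $1$-points are zeros of $H$ whenever $H\not\equiv0$. A short local computation also shows that $H$ has at worst simple poles, located only at the poles of $f$, at the zeros of $F'$ and $G'$ lying off the $1$-points, and at those common $1$-points whose $f$- and $f^{(k)}$-multiplicities differ.

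Then I would split into two cases. Suppose first $H\not\equiv0$. Then
\be
\ol N_{1)}(r,1;f)\leq N(r,0;H)\leq N(r,\infty;H)+m(r,H)+O(1)=N(r,\infty;H)+S(r,f),
\ee
and the pole description above bounds $N(r,\infty;H)$ by a combination of $\ol N(r,\infty;f)$, zero-of-derivative terms and the multiple $1$-points. Feeding this into the Second Main Theorem for $G=f^{(k)}$,
\be
T(r,f^{(k)})\leq \ol N(r,\infty;f^{(k)})+\ol N(r,0;f^{(k)})+\ol N(r,1;f^{(k)})+S(r,f^{(k)}),
\ee
and using $\ol N(r,\infty;f^{(k)})=\ol N(r,\infty;f)$ together with the standard Milloux-type estimates that bound $N(r,0;f^{(k)})$, $\ol N(r,0;f')$ and $\ol N(r,0;(f^{(k)})')$ in terms of $N(r,0;f)$ and $\ol N(r,\infty;f)$, I expect every term on the right to be absorbed into a bound of the shape $(3k+6)\,\ol N(r,\infty;f)+5\,N(r,0;f)+S(r,f^{(k)})$. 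Comparison with \eqref{e1.3} then gives $(1-\lambda-o(1))\,T(r,f^{(k)})\leq S(r,f^{(k)})$ for $r\in I$, impossible since $0<\lambda<1$. The main obstacle is precisely this bookkeeping: tracking how each multiple $1$-point, each zero of a derivative, and each pole (amplified through $k$ differentiations) contributes, so that the accumulated coefficients come out to be exactly $3k+6$ and $5$ and no larger.

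Therefore $H\equiv0$. Integrating once yields $\frac{G'}{(G-1)^2}=A\,\frac{F'}{(F-1)^2}$ for a nonzero constant $A$, and a second integration gives
\be
\frac{1}{f^{(k)}-1}=\frac{A}{f-1}+D
\ee
for a constant $D$. If $D=0$ this reads $\frac{f^{(k)}-1}{f-1}\equiv 1/A$, the desired conclusion. If instead $D\neq0$, then at any pole of $f$ the relation would force $0=0+D$, which is absurd, so $f$ has no poles and is entire; the only possible poles of $f^{(k)}$ would then come from zeros of $A+D(f-1)$, so $f$ must omit the finite value $1-A/D$. Writing $f-(1-A/D)=e^{\alpha}$ and substituting into the relation produces an identity among $e^{\alpha}$, $e^{-\alpha}$ and constants that violates Borel's theorem, ruling out $D\neq0$. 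Hence $D=0$ and the proof is complete.
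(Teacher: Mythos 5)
Your overall architecture is the right one---it is exactly the method this paper uses to prove its Theorem 1.1, which subsumes Theorem F as the special case $P[f]=f$, $Q[f]=f^{(k)}$, $a\equiv 1$, $m=\infty$ (the paper itself only quotes Theorem F from Liu--Yang). But there is a genuine gap, and it sits precisely where the theorem lives: in Case 1 you never derive the bound with the coefficients $3k+6$ and $5$; you say you ``expect'' the terms to be absorbed and defer the bookkeeping as ``the main obstacle.'' That bookkeeping is the entire quantitative content of the statement, and it is not automatic. What has to be done is: (i) apply the second main theorem to $G=f^{(k)}$ and split $\ol N(r,1;G)$ into the part controlled by $H$ and the part coming from multiple $1$-points; (ii) bound the latter, together with the pole terms of $H$, by counting functions of zeros of $f'$ and of $(f^{(k)})'$, arriving at an estimate of the shape $T(r,f^{(k)})\leq 4\ol N(r,\infty;f)+N_{2}(r,0;f^{(k)})+2\ol N(r,0;f^{(k)})+\ol N(r,0;f')+\ol N(r,0;f'\mid f\neq 0)+S(r,f)$, which is the paper's condition (\ref{e1.9}) specialized to this setting; and (iii) convert that to $(3k+6)\ol N(r,\infty;f)+5N(r,0;f)$ via the paper's Lemma \ref{l2.1} ($N_{p}(r,0;f^{(k)})\leq N_{p+k}(r,0;f)+k\ol N(r,\infty;f)+S(r,f)$) and Lemma \ref{l2.2}: the four zero-terms contribute $k,\,2k,\,1,\,1$ extra multiples of $\ol N(r,\infty;f)$ and $1,2,1,1$ copies of $N(r,0;f)$, giving exactly $(4+3k+2)\ol N(r,\infty;f)+5N(r,0;f)$. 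None of this appears in your proposal, and without it no specific coefficients are justified. There is also a concrete error in the one inequality you do write: under IM sharing, $\ol N_{1)}(r,1;f)\leq N(r,0;H)$ is false, because a $1$-point that is simple for $f$ but multiple for $f^{(k)}$ is a \emph{pole} of $H$, not a zero; only the common simple $1$-points, $N^{1)}_{E}(r,1;f)$, are zeros of $H$, and the leftover class is one of the things step (ii) must absorb.

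Your Case 2, by contrast, is correct in substance and genuinely different from the paper's treatment. Where the paper's proof of Theorem 1.1 must invoke the growth hypothesis again to dispose of the integration constant $D\neq 0$ (and in general only reaches its alternative conclusion (b)), you rule out $D\neq 0$ unconditionally: poles of $f$ force $D=0$, hence $f$ is entire, hence $f$ omits $1-A/D$, and writing $f=(1-A/D)+e^{\alpha}$ yields a Borel-type contradiction. Two caveats should be recorded. First, the resulting identity is $Q_{k}[\alpha]\,e^{\alpha}+(A/D^{2})e^{-\alpha}\equiv 1+1/D$, where $(e^{\alpha})^{(k)}=Q_{k}[\alpha]e^{\alpha}$ and $Q_{k}[\alpha]$ is a differential polynomial in $\alpha'$, so the coefficients are \emph{small functions} of $e^{\alpha}$, not constants; the argument survives because $T(r,\alpha^{(j)})=S(r,e^{\alpha})$, but the appeal to ``Borel's theorem with constants'' is loose as stated. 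Second, this unconditional disposal of $D\neq0$ works only because the shared quantity is the constant $1$: for a shared small function $a(z)$ it fails, as the paper's Example \ref{ex1.1} ($f=1+e^{e^{z}}$, $a=1/(1-e^{-z})$) shows. So Case 2 is a nice self-contained route in this special setting, but the proposal as a whole does not prove Theorem F, because Case 1---the part that produces $3k+6$ and $5$---is missing.
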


In 2008, improving the result of Zhang \cite{15}, Zhang and L$\ddot u$ \cite{16} further investigated the analogous problem of Br\"{u}ck conjecture for the $n$-th power of a meromorphic function sharing a small function with its $k$-th derivative and obtained the following theorem.
\begin{theoG}\cite{16} Let $f$ be a non-constant meromorphic function and $k(\geq 1)$ and $n(\geq 1)$ be integers. Also let $a\equiv a(z)$ ($\not\equiv 0,\infty$) be a meromorphic small function. Suppose that $f^{n}-a$ and $f^{(k)}-a$ share $0$ IM. If  \be\label{e1.4}4\ol N(r,\infty;f)+\ol N\left(r,0;f^{(k)}\right)+2N_{2}\left(r,0;f^{(k)}\right)+2\ol N\left(r,0;(f^{n}/a)^{'}\right)<(\lambda +o(1))\;T\left(r,f^{(k)}\right)\ee for $r\in I$, where $0<\lambda <1$ then $\frac{f^{(k)}-a}{f^{n}-a}=c$  for some constant $c\in\mathbb{C}/\{0\}$.\end{theoG}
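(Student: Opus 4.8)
The plan is to reduce the small-function sharing to ordinary $1$-sharing and then run a Nevanlinna-theoretic dichotomy on a single auxiliary function. First I would set $F=f^{n}/a$ and $G=f^{(k)}/a$; since $a$ is small we have $T(r,F)=nT(r,f)+S(r,f)$ and $T(r,G)=T(r,f^{(k)})+S(r,f)$, and the hypothesis that $f^{n}-a,\,f^{(k)}-a$ share $0$ IM becomes exactly the statement that $F$ and $G$ share the value $1$ IM. In these terms the conclusion $\frac{f^{(k)}-a}{f^{n}-a}=c$ is equivalent to $G-1=c(F-1)$. Every factor arising from $a$ and its derivatives will be absorbed into $S(r,f)$.

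Next I would introduce the auxiliary function
\[
H=\left(\frac{F''}{F'}-\frac{2F'}{F-1}\right)-\left(\frac{G''}{G'}-\frac{2G'}{G-1}\right).
\]
A direct local expansion at a simple common $1$-point $z_{0}$ shows that each bracket equals $-\frac{2}{z-z_{0}}+O(z-z_{0})$, with vanishing constant term, so the principal parts cancel and $H(z_{0})=0$; moreover the lemma on logarithmic derivatives gives $m(r,H)=S(r,f)$. The whole argument then splits according to whether $H\equiv 0$.

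If $H\equiv 0$, I would integrate twice. The first integration yields $\frac{F'}{(F-1)^{2}}=A\,\frac{G'}{(G-1)^{2}}$ for some nonzero constant $A$, and the second yields the M\"{o}bius relation $\frac{1}{F-1}=\frac{A}{G-1}-B$ for a constant $B$; the task is to show $B=0$. If $f$ has at least one pole, then both $F$ and $G$ tend to $\infty$ there, so $\frac{1}{F-1}\to 0$ and $\frac{A}{G-1}\to 0$, forcing $B=0$ and hence $G-1=A(F-1)$, i.e. $c=A$. If $f$ is entire, then $F=f^{n}$ has no poles, so when $B\neq 0$ the M\"{o}bius relation forces $G$ to omit the finite value $1+A/B$; applying the second main theorem to $G$ with this omitted value bounds $T(r,G)$ in terms of $\ol N(r,0;f^{(k)})$ (or, in degenerate cases, another hypothesis term) plus $S(r,f)$, contradicting the counting condition. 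Thus $B=0$ in every case and the conclusion follows.

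The hard part will be ruling out $H\not\equiv 0$. Because $H$ vanishes at every simple common $1$-point, I get $N_{1)}(r,1;F)\le N(r,0;H)\le T(r,H)+O(1)\le N(r,H)+S(r,f)$. A pole-by-pole inspection shows every pole of $H$ is simple and is located only at a pole of $f$, at a zero of $F'$ or $G'$ off the $1$-points, or at a multiple common $1$-point; the zeros of $G'$ that sit at zeros of $G$ feed into $N_{2}(r,0;f^{(k)})$, while the genuine critical zeros of $G'$ are cancelled by the term $-N_{0}(r,0;G')$ when I apply the second main theorem to $G$, namely $T(r,G)\le\ol N(r,\infty;G)+\ol N(r,0;G)+\ol N(r,1;G)-N_{0}(r,0;G')+S(r,f)$. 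Splitting $\ol N(r,1;G)=\ol N(r,1;F)=N_{1)}(r,1;F)+\ol N_{(2}(r,1;F)$ and substituting the bound for $N_{1)}(r,1;F)$, the real obstacle is the bookkeeping: I must combine the pole contributions of $H$ with the second-main-theorem terms so that the pole count of $f$ accumulates precisely to $4\ol N(r,\infty;f)$, the zeros of $(f^{n}/a)'$ to $2\ol N(r,0;(f^{n}/a)')$, and the zeros of $f^{(k)}$ to $\ol N(r,0;f^{(k)})+2N_{2}(r,0;f^{(k)})$, yielding
\[
T\!\left(r,f^{(k)}\right)\le 4\ol N(r,\infty;f)+\ol N\!\left(r,0;f^{(k)}\right)+2N_{2}\!\left(r,0;f^{(k)}\right)+2\ol N\!\left(r,0;(f^{n}/a)'\right)+S(r,f).
\]
Combined with the hypothesis this gives $T(r,f^{(k)})\le(\lambda+o(1))\,T(r,f^{(k)})$ along $r\in I$ with $\lambda<1$, which is impossible. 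Hence $H\equiv 0$, and the previous paragraph finishes the proof.
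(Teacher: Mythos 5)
Your overall route --- passing to $F=f^{n}/a$, $G=f^{(k)}/a$ sharing $1$, the auxiliary function $H$, the dichotomy on $H$, double integration to the M\"{o}bius relation, and second-main-theorem contradictions in the generic branches --- is essentially the route the paper takes (its Theorem \ref{t1} contains Theorem G as the case $P[f]=f^{n}$, $Q[f]=f^{(k)}$, conclusion (i)). The genuine gap is the subcase you dismiss with the parenthesis ``or, in degenerate cases, another hypothesis term'': namely $B\neq 0$ with $1+A/B=0$. There the M\"{o}bius relation collapses to the algebraic identity
\[
f^{(k)}\left(f^{n}-\left(1+\tfrac{1}{A}\right)a\right)\equiv -\frac{a^{2}}{A},
\]
which is exactly the paper's conclusion (b), and the value that $G$ omits is $0$ itself. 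The second main theorem for $G$ with the values $0,1,\infty$ then gives only $T(r,G)\leq \ol N(r,1;G)+S(r,f)$, and nothing in (\ref{e1.4}) controls $\ol N(r,1;G)$: in this configuration $F$ and $G$ share $1$ CM with equal multiplicities (since $F-1=(G-1)/(AG)$), simple $1$-points are zeros of neither $F^{'}$ nor $G^{'}$, and $\ol N(r,\infty;f)$, $\ol N(r,0;f^{(k)})$, $N_{2}(r,0;f^{(k)})$ are all $S(r,f)$ here, so (\ref{e1.4}) is perfectly compatible with this situation. This is not a removable technicality: the paper's Examples \ref{ex1.9}--\ref{ex1.13} exhibit precisely such degenerate identities with the counting hypothesis satisfied (for other polynomials in place of $f^{n}$), so no counting argument of the kind you propose can dispose of this case; one must exploit the specific form $f^{n}$.

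What is needed is the paper's argument (\ref{e3.12})--(\ref{e3.14}) specialized to $P[f]=f^{n}$, $Q[f]=f^{(k)}$: from the identity, $f$ has no poles and no zeros of multiplicity $\geq k+1$ outside the zeros and poles of $a$, whence $\ol N(r,\infty;f)=S(r,f)$ and $N(r,0;f\mid\geq k+1)=S(r,f)$; rewriting the identity as
\[
\frac{1}{f\left(f^{n}-(1+\frac{1}{A})a\right)}\equiv -\frac{A}{a^{2}}\cdot\frac{f^{(k)}}{f},
\]
Mokhon'ko's lemma (Lemma \ref{l2.3}) evaluates the characteristic of the left side as $(n+1)T(r,f)+S(r,f)$, while $m\left(r,f^{(k)}/f\right)=S(r,f)$ and $N\left(r,f^{(k)}/f\right)\leq k\ol N(r,\infty;f)+k\ol N(r,0;f\mid\geq k+1)+N(r,0;f\mid\leq k)+S(r,f)\leq T(r,f)+S(r,f)$ (a special case of Lemma \ref{l2.5}) bound the right side; hence $nT(r,f)\leq S(r,f)$, which is absurd. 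Without this (or an equivalent) argument your proof does not establish the theorem. Two lesser points. First, your pointwise claim ``$f$ has a pole $\Rightarrow B=0$'' needs that pole to avoid the zeros and poles of $a$; the correct dichotomy is ``$f$ has a pole outside the zero/pole set of $a$'' versus ``$\ol N(r,\infty;f)=S(r,f)$'', and in the latter case your entire-function argument survives with ``omits'' weakened to $\ol N\left(r,1+A/B;G\right)=S(r,f)$. Second, in the case $H\not\equiv 0$ you state the target inequality correctly, and it is attainable (via the lemmas the paper quotes as Lemmas \ref{l2.1} and \ref{l2.2}), but as written that step is an assertion of the bookkeeping, not a proof of it.
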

At the end of \cite{16} the following question was raised by Zhang and L$\ddot u$ \cite{16}. \par
What will happen if $f^{n}$ and $[f^{(k)}]^{m}$ share a small function ?\par
In order to answer the above question, Liu \cite{8a} obtained the following result.
\begin{theoH}\cite{8a} Let $f$ be a non-constant meromorphic function and $k(\geq 1)$, $n(\geq 1)$ and $m(\geq 2)$ be integers. Also let $a\equiv a(z)$ ($\not\equiv 0,\infty$) be a meromorphic small function. Suppose that $f^{n}-a$ and $(f^{(k)})^{m}-a$ share $0$ IM. If \be\label{e1.7}\frac{4}{m}\ol N(r,\infty;f)+\frac{5}{m}\ol N\left(r,0;f^{(k)}\right)+\frac{2}{m}\ol N\left(r,0;(f^{n}/a)^{'}\right)<(\lambda +o(1))\;T\left(r,f^{(k)}\right)\ee for $r\in I$, where $0<\lambda <1$ then $\frac{(f^{(k)})^{m}-a}{f^{n}-a}=c$  for some constant $c\in\mathbb{C}/\{0\}$.\end{theoH}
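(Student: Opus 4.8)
The plan is to linearize the problem by setting $F=f^{n}/a$ and $G=(f^{(k)})^{m}/a$. Since $a$ is a small function and $T(r,h^{m})=mT(r,h)+O(1)$, we have the two identities $T(r,F)=nT(r,f)+S(r,f)$ and, crucially, $T(r,G)=mT(r,f^{(k)})+S(r,f)$. Thus, after multiplying through by $m$, the hypothesis \eqref{e1.7} reads $4\ol N(r,\infty;f)+5\ol N(r,0;f^{(k)})+2\ol N(r,0;(f^{n}/a)')<(\lambda+o(1))T(r,G)$; that is, the three counting functions on the left are jointly controlled by $\lambda\,T(r,G)$. The hypothesis that $f^{n}-a$ and $(f^{(k)})^{m}-a$ share $0$ IM says exactly that $F$ and $G$ share the value $1$ IM, up to the zeros and poles of $a$, which contribute only $S(r)$. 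Finally, the desired conclusion $\frac{(f^{(k)})^{m}-a}{f^{n}-a}=c$ is precisely the assertion that $\frac{G-1}{F-1}$ is a nonzero constant.

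Next I would introduce the standard auxiliary function for IM sharing,
\be H=\left(\frac{F''}{F'}-\frac{2F'}{F-1}\right)-\left(\frac{G''}{G'}-\frac{2G'}{G-1}\right).\ee
Two features drive the argument. First, $H$ is built from logarithmic derivatives, so $m(r,H)=S(r)$. Second, a local computation shows that at a common simple $1$-point of $F$ and $G$ the two bracketed expressions agree to first order, so $H$ is holomorphic and in fact vanishes there; likewise $H$ has no pole at a simple pole of $F$, since the residue $p-1$ of $\frac{F''}{F'}-\frac{2F'}{F-1}$ at a pole of order $p$ vanishes when $p=1$, and similarly for $G$.

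The heart of the proof is to show $H\equiv 0$. Assuming $H\not\equiv 0$, the vanishing of $H$ at common simple $1$-points yields $\ol N_{1)}(r,1;F)\le N(r,\infty;H)+S(r)$, while the residue computation shows that the poles of $H$ occur only at multiple $1$-points, at multiple poles of $F$ or $G$, and at zeros of $F'$ and $G'$ away from $1$-points. Since poles of $F$ and of $G$ both arise from poles of $f$ (hence are bounded by $\ol N(r,\infty;f)$), zeros of $G$ arise from zeros of $f^{(k)}$, and zeros of $F'$ are counted by $\ol N(r,0;(f^{n}/a)')$, one bounds $N(r,\infty;H)$ by a fixed combination of these three counting functions. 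Feeding this into the second fundamental theorem applied to $G$ with the values $0,1,\infty$ — where $\ol N(r,1;G)=\ol N(r,1;F)$ is split into its simple part (absorbed by $N(r,\infty;H)$) and its multiple part (which are zeros of $F'$) — yields an estimate whose right-hand side assembles into exactly the weighted sum appearing in \eqref{e1.7}, so that $T(r,G)\le(\lambda+o(1))T(r,G)+S(r)$, impossible for $0<\lambda<1$. The main obstacle is precisely this bookkeeping: one must track the multiplicities with which poles of $f$, zeros of $f^{(k)}$, and zeros of $F'$ enter both $N(r,\infty;H)$ and the counting and ramification terms of the second main theorem (using the partial cancellation of the $N_{0}(r,0;G')$ term) so that the coefficients collapse to exactly $4$, $5$ and $2$.

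Finally, once $H\equiv 0$ is established, integrating twice gives $\frac{F'}{(F-1)^{2}}=A\frac{G'}{(G-1)^{2}}$ and then
\be \frac{1}{F-1}=\frac{A}{G-1}+B,\qquad A\neq 0,\ B\in\mathbb{C}.\ee
If $B=0$ this reads $G-1=A(F-1)$, so $\frac{G-1}{F-1}=A$ is the required nonzero constant $c$ and we are done. To exclude $B\neq 0$, solving for $F$ exhibits it as a M\"{o}bius transform of $G$, under which the poles of $F$ occur exactly where $G$ assumes a single finite value $\gamma\neq 1$. Since the poles of $F$ and of $G$ come from poles of $f$, and the zeros of $G$ from zeros of $f^{(k)}$, the second fundamental theorem applied to $G$ at three suitable values (whose preimages are all governed by $\ol N(r,\infty;f)$ and $\ol N(r,0;f^{(k)})$, with a routine modification when $\gamma=0$) bounds $T(r,G)$ by the same small combination as before, once more contradicting the growth hypothesis. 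Hence $B=0$, which completes the proof.
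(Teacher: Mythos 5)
Your overall architecture is the same as the paper's: Theorem H is quoted there from Liu, but the paper's main Theorem \ref{t1} contains it as the special case $P[f]=f^{n}$, $Q[f]=(f^{(k)})^{m}$, and the proof of Theorem \ref{t1} uses exactly your $F$, $G$, the same auxiliary function $H$, the same second-fundamental-theorem contradiction with the growth hypothesis when $H\not\equiv 0$, and the same integrated relation $\frac{1}{F-1}=\frac{A}{G-1}+B$ when $H\equiv 0$. Your Case $H\not\equiv 0$ (modulo the bookkeeping you acknowledge) and your sub-case $B\neq 0$ with $\gamma=1-A/B\neq 0$ (the paper's $C/D\neq 1$) are sound and agree with the paper.

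The genuine gap is the sub-case $\gamma=0$, i.e. $A=B$ (the paper's $C/D=1$), which you wave off as ``a routine modification.'' It is not routine; it is the crux of the whole theorem. There the M\"{o}bius relation degenerates into the algebraic identity $FG-(1+\frac{1}{B})G\equiv-\frac{1}{B}$, i.e. $(f^{(k)})^{m}\left(f^{n}-(1+\frac{1}{B})a\right)\equiv-\frac{a^{2}}{B}$, and this configuration cannot be excluded by counting-function bookkeeping alone: the paper's Examples \ref{ex1.9}--\ref{ex1.13} exhibit differential polynomials sharing $1$ CM and satisfying the growth condition for which precisely this identity (conclusion (b) of Theorem \ref{t1}) holds and the Br\"{u}ck-type conclusion fails, so any correct proof must exploit the special shape of $f^{n}$ and $(f^{(k)})^{m}$ at this point. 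Moreover, your proposed tool --- the second fundamental theorem at three values with small preimage counting functions --- is unavailable here when $n=1$ (which Theorem H allows): the only values with $S(r,f)$ preimages are $0$ and $\infty$ for $G$ (zeros of $G$ coincide with poles of $F$, both small), and $\infty$ and $1+\frac{1}{B}$ for $F$; the needed third value $0$ for $F$ has $\ol N(r,0;F)=\ol N(r,0;f)+S(r,f)$, which nothing in (\ref{e1.7}) controls, since a simple zero of $f$ is not a zero of $(f^{n}/a)^{'}$ when $n=1$. What the paper actually does (its (\ref{e3.12})--(\ref{e3.14})) is a characteristic-function comparison through Mokhon'ko's and Chuang's lemmas: from the identity, $(n+m)T(r,f)=T\left(r,f^{m}(f^{n}-(1+\frac{1}{B})a)\right)+S(r,f)=T\left(r,(f^{(k)}/f)^{m}\right)+S(r,f)$; since $m\left(r,(f^{(k)}/f)^{m}\right)=S(r,f)$ and the poles of $(f^{(k)}/f)^{m}$ are governed by $mN(r,0;f\mid\leq k)$ together with $\ol N(r,0;f\mid\geq k+1)=S(r,f)$ and $\ol N(r,\infty;f)=S(r,f)$ (both of which follow from the identity and $B\neq 0$), the right side is at most $mT(r,f)+S(r,f)$, whence $nT(r,f)\leq S(r,f)$, absurd. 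This logarithmic-derivative comparison is the missing idea; without it your proof does not close.
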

Next we recall the following definitions.
\begin{defi} Let $n_{0j},n_{1j},\ldots,n_{kj}$ be non negative integers.\\
The expression $M_{j}[f]=(f)^{n_{0j}}(f^{(1)})^{n_{1j}}\ldots(f^{(k)})^{n_{kj}}$ is called a differential monomial generated by $f$ of degree $d(M_{j})=\sum\limits_{i=0}^{k}n_{ij}$ and weight
$\Gamma_{M_{j}}=\sum\limits_{i=0}^{k}(i+1)n_{ij}$.

The sum $P[f]=\sum\limits_{j=1}^{t}b_{j}M_{j}[f]$ is called a differential polynomial generated by $f$ of degree $\ol{d}(P)=max\{d(M_{j}):1\leq j\leq t\}$
and weight $\Gamma_{P}=max\{\Gamma_{M_{j}}:1\leq j\leq t\}$, where $T(r,b_{j})=S(r,f)$ for $j=1,2,\ldots,t$.

The numbers $\underline{d}(P)=min\{d(M_{j}):1\leq j\leq t\}$ and k(the highest order of the derivative of $f$ in $P[f]$ are called respectively the lower degree and order of $P[f]$.

$P[f]$ is said to be homogeneous if $\ol{d}(P)$=$\underline{d}(P)$.

$P[f]$ is called a Linear Differential Polynomial generated by $f$ if $\ol {d} (P)=1$. Otherwise $P[f]$ is called Non-linear Differential Polynomial. We also denote by $\mu =max\; \{\Gamma _{M_{j}}-d(M_{j}): 1\leq j\leq t\}=max\; \{ n_{1j}+2n_{2j}+\ldots+kn_{kj}: 1\leq j\leq t\}$. \end{defi}
As $(f^{(k)})^{m}$ is simply a special differential monomial in $f$, it will be interesting to investigate whether {\it Theorems D-H} can be extended up to differential polynomial generated by $f$.
In this direction recently Li and Yang \cite{8c} improved {\it Theorem D} in the following manner.
\begin{theoI}\cite{8c} Let $f$ be a non-constant meromorphic function $P[f]$ be a differential polynomial generated by $f$. Also let $a\equiv a(z)$ ($\not\equiv 0,\infty$) be a small meromorphic function. Suppose that $f-a$ and $P[f]-a$ share $0$ IM and $(t-1)\ol {d}(P)\leq \sum\limits_{j=1}^{t}d(M_{j})$. If  \bea\label{e1.8}&& 4\ol N(r,\infty;f)+3N_{2}\left(r,0;P[f]\right)+2\ol N\left(r,0;(f/a)^{'}\right)<(\lambda +o(1))\;T\left(r,P[f]\right)\eea for $r\in I$, where $0<\lambda <1$ then $\frac{P[f]-a}{f-a}=c$  for some constant $c\in\mathbb{C}/\{0\}$.\end{theoI}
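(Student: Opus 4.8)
The plan is to follow the classical auxiliary-function route for IM-sharing Br\"uck-type problems, adapted to the differential polynomial $P[f]$. First I would normalise by setting $F=f/a$ and $G=P[f]/a$. Since $f-a$ and $P[f]-a$ share $0$ IM, the functions $F$ and $G$ share the value $1$ IM, and the desired conclusion $\frac{P[f]-a}{f-a}=c$ is exactly the statement that $\frac{G-1}{F-1}$ is a non-zero constant. To detect this I would introduce the standard Lahiri-type function
\[
H=\left(\frac{F''}{F'}-\frac{2F'}{F-1}\right)-\left(\frac{G''}{G'}-\frac{2G'}{G-1}\right),
\]
whose decisive property is that every point which is a simple $1$-point of both $F$ and $G$ is a zero of $H$. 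The entire argument then splits according to whether $H\equiv 0$ or $H\not\equiv 0$.

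In the case $H\not\equiv 0$ I would first locate the poles of $H$: being logarithmic-derivative combinations, $H$ can have only simple poles, and these sit at the poles of $f$, at the multiple $1$-points of $F$ and $G$, and at the zeros of $F'=(f/a)'$ and of $G'$, the contributions coming from the zeros and poles of the small function $a$ being absorbed in $S(r,f)$. A careful residue bookkeeping majorises $N(r,\infty;H)$ by $\ol N(r,\infty;f)$, $N_{2}(r,0;P[f])$ and $\ol N(r,0;(f/a)')$ with the weights displayed in the hypothesis, while the lemma on the logarithmic derivative gives $m(r,H)=S(r,f)$. Since every common simple $1$-point of $F$ and $G$ is a zero of $H$, the reduced counting function of those shared points is at most $N(r,0;H)\le T(r,H)+O(1)=N(r,\infty;H)+S(r,f)$. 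Plugging this into the second fundamental theorem for $G=P[f]/a$ and translating back would produce an inequality of the shape $T(r,P[f])\le 4\ol N(r,\infty;f)+3N_{2}(r,0;P[f])+2\ol N(r,0;(f/a)')+S(r,f)$; combined with the hypothesis this forces $(1-\lambda-o(1))\,T(r,P[f])\le S(r,f)$, which is impossible for $0<\lambda<1$. This is precisely where the weights $4$, $3$ and $2$ are calibrated, so $H\not\equiv 0$ cannot occur.

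It remains to treat $H\equiv 0$. Integrating the identity $H=0$ twice yields a relation of the shape
\[
\frac{1}{F-1}=\frac{C}{G-1}+D,\qquad C\in\mathbb{C}\setminus\{0\},\ D\in\mathbb{C}.
\]
If $D=0$ this reads $G-1=C(F-1)$, that is $\frac{P[f]-a}{f-a}=C$, which is the assertion. Hence the task reduces to excluding $D\neq 0$. When $D\neq 0$ the relation realises $F$ as a fractional linear transform of $G$, under which the poles of $f$ correspond to the single $G$-value $1-\frac{C}{D}$; a short case analysis according to the position of this exceptional value, together with the second fundamental theorem applied to $G$, bounds $T(r,P[f])$ by $\ol N(r,\infty;f)$ and the zero-counting functions already present in the hypothesis, again contradicting the inequality. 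Therefore $D=0$ and the proof is complete.

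The main obstacle, and the reason this is genuinely harder than {\it Theorem D} (where $P[f]=f^{(k)}$), is the need to convert every estimate stated for $G=P[f]/a$ into one governed by $T(r,P[f])$ and by the Nevanlinna functions of $f$ itself. One must track the poles of $P[f]$, where a pole of $f$ of order $p$ forces a pole of $M_{j}[f]$ of order $p\,d(M_{j})+(\Gamma_{M_{j}}-d(M_{j}))$, and, more delicately, control $\ol N(r,0;P[f])$, $N_{2}(r,0;P[f])$ and $T(r,P[f])$ uniformly against $T(r,f)$ across all $t$ monomials. It is exactly here that the hypothesis $(t-1)\ol{d}(P)\le\sum_{j=1}^{t}d(M_{j})$ is used: it keeps the degree-and-weight bookkeeping consistent so that the comparison $T(r,P[f])\asymp\ol{d}(P)\,T(r,f)$ survives up to $S(r,f)$ and the small terms in the inequality remain genuinely negligible against $T(r,P[f])$. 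Establishing these uniform relations for an arbitrary differential polynomial is the technical heart on which both cases above rest.
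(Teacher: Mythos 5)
You should first note that the paper does not prove Theorem I at all: it is quoted from Li--Yang \cite{8c}, and what the paper proves is the more general Theorem \ref{t1}, which recovers Theorem I via Remark \ref{r1.1} (take $m=\infty$ and the first polynomial equal to $f$). Measured against that proof, your architecture is the same one the paper uses: $F=f/a$, $G=P[f]/a$, the auxiliary function $H$, the dichotomy $H\not\equiv 0$ versus $H\equiv 0$, and the integrated relation $\frac{1}{F-1}=\frac{C}{G-1}+D$. Your Case $H\not\equiv 0$ is a faithful outline of the corresponding part of the paper's argument, and the calibration of the weights $4,3,2$ does come out as you describe.

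The genuine gap is in Case $H\equiv 0$ with $D\neq 0$. Your ``short case analysis'' works only when $\frac{C}{D}\neq 1$: then $1-\frac{C}{D}$ is a value of $G$ distinct from $0$ with $\ol N(r,1-\frac{C}{D};G)=\ol N(r,\infty;F)=S(r,f)$, and the second fundamental theorem applied to $G$ contradicts (\ref{e1.8}). But when $\frac{C}{D}=1$ the exceptional value collapses onto $0$, and the relation becomes $\left(F-1-\frac{1}{C}\right)G\equiv -\frac{1}{C}$, i.e. $\left(f-(1+\frac{1}{C})a\right)P[f]\equiv -\frac{a^{2}}{C}$. Here the second fundamental theorem yields nothing: this equation forces the $1$-points of $F$ and $G$ to coincide, these are exactly the $a$-points of $f$, no quantity in (\ref{e1.8}) controls $\ol N(r,a;f)$, and all other relevant counting functions are $S(r,f)$, so the estimate degenerates to the vacuous $T(r,G)\leq \ol N(r,1;G)+S(r,f)$. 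This configuration is precisely conclusion (b) of Theorem \ref{t1}, and Examples \ref{ex1.9}--\ref{ex1.13} of the paper (Example \ref{ex1.10} is literally $H\equiv 0$, $C=D=1$) show it genuinely occurs with the counting-function hypothesis satisfied; hence no argument that ignores the special form $F=f/a$ can close this case. The paper closes it by a different mechanism, (\ref{e3.12})--(\ref{e3.14}): it rewrites the relation as $\frac{1}{f^{\ol{d}(P)}(f-(1+1/C)a)}\equiv -\frac{C}{a^{2}}\cdot\frac{P[f]}{f^{\ol{d}(P)}}$, applies Mokhon'ko's Lemma \ref{l2.3} to the left side (legitimate because $f^{\ol{d}(P)}\left(f-(1+\frac{1}{C})a\right)$ is a polynomial in $f$), and applies Chuang's Lemma \ref{l2.4} together with the pole-order Lemma \ref{l2.5}, $N(r,0;f\mid\geq k+1)=S(r,f)$ and $\ol N(r,\infty;f)=S(r,f)$ to the right side, obtaining $(1+\ol{d}(P))T(r,f)\leq \ol{d}(P)T(r,f)+S(r,f)$, which is absurd. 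Your proposal contains no counterpart of this step, and it is the technical heart of the theorem. Relatedly, your account of the hypothesis $(t-1)\ol{d}(P)\leq\sum_{j}d(M_{j})$ --- that it secures a comparison $T(r,P[f])\asymp \ol{d}(P)T(r,f)$ --- is not correct (no such two-sided comparison holds in general, and the paper's route shows this hypothesis can be dropped entirely when one of the sharing functions is $f$ itself); whatever structural input is used, it must be invested exactly in the degenerate subcase $\frac{C}{D}=1$ that your sketch treats as routine.
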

So we see that {\it Theorem I} always holds for a monomial without any condition on its degree. But for general differential polynomial one can not eliminate the supposition $(t-1)\ol {d}(P)\leq \sum\limits_{j=1}^{t}d(M_{j})$ in the above theorem.
So whether in {\it Theorem I}, the condition over the degree can be removed, sharing notion can further be relaxed, (\ref{e1.8}) can further be weakened, are all open problems.\par
We also observe that the afterward research on Br\"{u}ck and its generalization, one setting among the sharing functions has been restricted to only various powers of $f$ not involving any other variants such as derivatives of $f$, where as the generalization have been made on the second setting.
This observation must motivate oneself to find the answer of the following question.
\begin{ques} Can Br\"{u}ck type conclusion be obtained when two different differential polynomials share a small functions IM or even under relaxed sharing notions ?\end{ques}
The main intention of the paper is to obtain the possible answer of the above question in such a way that it improves, unifies and generalizes all the {\it Theorems D-H}. Following theorem is the main result of the paper.
Henceforth by $b_{j}$, $j=1,2,\ldots, t$ and $c_{i}$ $i=1,2,\ldots, l$ we denote small functions in $f$ and we also suppose that $P[f]=\sum\limits_{j=1}^{t}b_{j}M_{j}[f]$ and $Q[f]=\sum\limits_{i=1}^{l}c_{i}M_{i}[f]$ be two differential polynomial generated by $f$.
\begin{theo}\label{t1} Let $f$ be a non-constant meromorphic function, $m(\geq 1)$ be a positive integer or infinity and $a\equiv a(z)$ ($\not\equiv 0,\infty$) be a small meromorphic function. Suppose that $P[f]$ and $Q[f]$ be two differential polynomial generated by $f$ such that $Q[f]$ contains at least one derivative. Suppose further that $\ol E_{m)}(a,P[f])=\ol E_{m)}(a,Q[f])$. If
\bea\label{e1.9}&& 4\ol N(r,\infty;f)+N_{2}\left(r,0;Q[f]\right)+2\ol N\left(r,0;Q[f]\right)+\ol N\left(r,0;(P[f]/a)^{'}\right)\\&&+\ol N\left(r,0;(P[f]/a)^{'}\mid (P[f]/a)\not =0\right)<(\lambda +o(1))\;T\left(r,Q[f]\right)\nonumber\eea for $r\in I$, where $0<\lambda <1$ then either $ \textbf a) \;\;\frac{Q[f]-a}{P[f]-a}=c,$ for some constant $c\in\mathbb{C}/\{0\}$ \\ or $ \textbf b)\;\; P[f]Q[f]-aQ[f](1+d)\equiv -da^{2},$ for a non-zero constant $d\in \mathbb{C}$.\\
In particular,\\ if i) $P[f]=b_{1}f^{n}+b_{2}f^{n-1}+b_{3}f^{n-2}+\ldots+b_{t-1}f$ or if
\\ii) $\underline {d}(Q) >2\ol {d}(P)-\underline {d}(P)$ and each monomial of $Q[f]$ contains a term involving a power of $f$, then the conclusion (b) does not hold.  \end{theo}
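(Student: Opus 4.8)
The plan is to normalise the problem by setting $F=P[f]/a$ and $G=Q[f]/a$, so that the hypothesis $\ol E_{m)}(a,P[f])=\ol E_{m)}(a,Q[f])$ becomes a truncated sharing of the value $1$ between $F$ and $G$, and the target conclusion (a) reads $(G-1)/(F-1)=\text{const}$. Each counting function in \eqref{e1.9} then translates into data about $F$, $G$ and their derivatives: $\ol N(r,0;(P[f]/a)')=\ol N(r,0;F')$, the zeros of $Q[f]$ are the zeros of $G$, and $\ol N(r,\infty;f)$ controls the poles of both $F$ and $G$ up to $S(r,f)$. Since $Q[f]$ contains a derivative, the logarithmic-derivative estimates below are available and $T(r,Q[f])$ and $T(r,f)$ are of the same order, so $S(r,f)=S(r,Q[f])$.

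The heart of the argument is the auxiliary function
\[
\Phi=\left(\frac{F''}{F'}-\frac{2F'}{F-1}\right)-\left(\frac{G''}{G'}-\frac{2G'}{G-1}\right).
\]
A local computation shows that at a common \emph{simple} $1$-point $z_0$ each bracket equals $-2/(z-z_0)+O(z-z_0)$ (the constant term cancels), so $\Phi$ is holomorphic and in fact vanishes there. Hence, if $\Phi\not\equiv0$, the common simple $1$-points are zeros of $\Phi$, whence their counting function is at most $N(r,0;\Phi)\le N(r,\infty;\Phi)+m(r,\Phi)+O(1)$. As $\Phi$ is built from logarithmic derivatives, $m(r,\Phi)=S(r,f)$, while the poles of $\Phi$ can only occur at the multiple poles of $f$, at the zeros of $F'$ and of $G'$, and at the multiple or non-shared $1$-points. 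Feeding this into the second main theorem applied to $G$,
\[
T(r,G)\le \ol N(r,0;G)+\ol N(r,\infty;G)+\ol N(r,1;G)-N_0(r,0;G')+S(r,f),
\]
and estimating $\ol N(r,1;G)$ by the common simple $1$-points plus the controlled remainder, one is led to $T(r,Q[f])\le(\lambda+o(1))T(r,Q[f])+S(r,f)$ for $r\in I$, which contradicts $0<\lambda<1$; therefore $\Phi\equiv0$. The hard part will be making the pole count of $\Phi$ sharp enough to reproduce \emph{exactly} the five terms of \eqref{e1.9}: in particular one must separate the zeros of $(P[f]/a)'$ lying over zeros of $P[f]/a$ from those that do not (this is why both $\ol N(r,0;(P[f]/a)')$ and $\ol N(r,0;(P[f]/a)'\mid(P[f]/a)\neq0)$ appear), and correctly account for the $1$-points of multiplicity exceeding $m$ that the truncated hypothesis $\ol E_{m)}$ fails to match.

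Integrating $\Phi\equiv0$ once gives $F'/(F-1)^2=A\,G'/(G-1)^2$ with $A\neq0$, and a second time the M\"obius relation $1/(F-1)=A/(G-1)+B$. If $B=0$ this is precisely $(G-1)/(F-1)=A$, i.e. conclusion (a). If $B\neq0$, clearing denominators and returning to $P[f],Q[f]$ yields a relation among $P[f]$, $Q[f]$ and $a$; a short pole-counting argument (the poles of $G$ forced by $F=1+1/B$ must sit over poles of $f$ or zeros of $a$) discards the degenerate possibility of a surviving linear term in $P[f]$ and forces $A=B$, whereupon the relation collapses to $P[f]Q[f]-aQ[f](1+d)\equiv -da^2$ with $d=1/B$, which is conclusion (b).

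Finally, to rule out (b) under (i) or (ii) I would rewrite it as $Q[f]\bigl(P[f]-(1+d)a\bigr)=-da^2$, a small function. Under (i), where $P[f]=b_1f^n+\cdots+b_{t-1}f$ factors as $b_1\prod_{i=1}^{n}(f-\alpha_i)$ with small-function roots $\alpha_i\neq0$, the relation forces $\ol N(r,0;P[f]-(1+d)a)=S(r,f)$ and $\ol N(r,\infty;f)=S(r,f)$, so $f$ would omit the $n+1$ small functions $\alpha_1,\dots,\alpha_n,\infty$, which for $n\ge2$ violates the second main theorem for small functions; hence (b) is impossible. Under (ii), the same relation gives $T(r,Q[f])=T(r,P[f])+S(r,f)$, while the assumption that every monomial of $Q[f]$ contains a power of $f$ lets one bound $T(r,P[f])$ above by $\ol d(P)\,T(r,f)$ and $T(r,Q[f])$ below by essentially $\underline d(Q)\,T(r,f)$; the resulting comparison is incompatible with $\underline d(Q)>2\ol d(P)-\underline d(P)$, the threshold $2\ol d(P)-\underline d(P)$ arising from the gap between the upper and lower degree of $P$ in these growth estimates. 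Thus in both special cases only alternative (a) survives.
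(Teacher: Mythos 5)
Your overall skeleton coincides with the paper's own proof: the same auxiliary function (your $\Phi$ is the paper's $H$), the same dichotomy ($H\not\equiv 0$ is killed by the second fundamental theorem against \eqref{e1.9}; $H\equiv 0$ integrates to the M\"obius relation $\frac{1}{F-1}=\frac{C}{G-1}+D$), and the same trichotomy: $D=0$ gives (a), $D\neq 0$ with $C/D=1$ gives (b), and the middle case is excluded. One caution already at this stage: excluding the middle case $D\neq 0$, $C/D\neq 1$ is not a ``short pole-counting argument.'' The paper shows $\ol N\left(r,1-\frac{C}{D};G\right)=\ol N(r,\infty;F)=S(r,f)$ and then must \emph{re-invoke} \eqref{e1.9}: the second fundamental theorem yields $T\left(r,Q[f]\right)=N_{2}\left(r,0;Q[f]\right)+S(r,f)$, which contradicts \eqref{e1.9}. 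Without using \eqref{e1.9} again this case cannot be discarded, and your sketch never mentions it.

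The genuine gaps are in your elimination of conclusion (b) under (i) and (ii), which is a substantive half of the theorem. Under (i): a polynomial $b_{1}f^{n}+\dots+b_{t-1}f-(1+d)a$ whose coefficients are small \emph{meromorphic} functions does not factor as $b_{1}\prod_{i}(f-\beta_{i})$ with small meromorphic roots; the roots are in general algebroid (already $f^{2}-(1+d)a$ has roots $\pm\sqrt{(1+d)a}$, typically not meromorphic), and they may coincide, so the ``second main theorem for small functions applied to $n+1$ omitted targets'' is simply unavailable. Worse, your argument explicitly requires $n\geq 2$, so it says nothing about $n=1$, i.e. about $P[f]=b_{1}f$ --- precisely the case containing Theorems D and G that the theorem is meant to generalize. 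The paper avoids all of this: from (b) it writes $\frac{1}{f^{\ol d(Q)}\left(P[f]-(1+d)a\right)}\equiv-\frac{C}{a^{2}}\frac{Q[f]}{f^{\ol d(Q)}}$, applies Mokhon'ko's lemma to the left-hand side and Lemmas \ref{l2.4}--\ref{l2.5} to the right, and gets $nT(r,f)\leq S(r,f)$ for every $n\geq 1$. Under (ii): the two-sided bounds you propose are not valid for differential polynomials. There is no general lower bound $T(r,Q[f])\geq\underline d(Q)\,T(r,f)+S(r,f)$ --- cancellation can even make $Q[f]$ vanish identically ($Q[f]=ff''-ff'$, each monomial containing a power of $f$, gives $Q[e^{z}]\equiv 0$) --- and the upper bound $T(r,P[f])\leq\ol d(P)\,T(r,f)+S(r,f)$ omits the term $(\ol d(P)-\underline d(P))\,m(r,1/f)$ caused by non-homogeneity, which is exactly the source of the threshold $2\ol d(P)-\underline d(P)$. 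The paper's actual mechanism is different: it uses the inequality $m\left(r,\frac{1}{f}\right)\leq\frac{1}{\underline d(Q)}m\left(r,\frac{1}{Q[f]}\right)$ (valid because every monomial of $Q$ has total degree at least $\underline d(Q)$), together with the fact that zeros of $f$ are controlled since each monomial of $Q[f]$ contains a power of $f$, and splits into $C=-1$ (where $P[f]Q[f]\equiv a^{2}$ and one compares $T(r,Q[f])$ with $\frac{2\ol d(P)-\underline d(P)}{\underline d(Q)}m\left(r,\frac{1}{Q[f]}\right)$) and $C\neq-1$ (second fundamental theorem applied to $F$ plus Lemma \ref{l2.8}). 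As written, the ``in particular'' part of the theorem is unproven in your proposal.
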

\begin{rem}\label{r1.1} Clearly in {\em Theorem {\ref{t1}}} when $m=\infty$ we have $P[f]-a$ and $Q[f]-a$ share $0$ IM where $P[f]=b_{1}f^{n}+b_{2}f^{n-1}+b_{3}f^{n-2}+\ldots+b_{t-1}f$ and we obtain the improved, extended and generalized version of {\em Theorem I} in the direction of {\em Question 1.1}.\end{rem}
Following five examples show that (\ref{e1.9}) is not necessary when (i) and (ii) of Theorem \ref{t1} occurs.
\begin{exm}\label{ex1.4}  Let $f(z)=\frac{e^{z}}{e^{z}+1}$. $P[f]=f^{2}$, $Q[f]=f-f^{'}$. Then clearly $P[f]$ and $Q[f]$ share $1$ CM and $\frac{Q[f]-1}{P[f]-1}=1$, but (\ref{e1.9}) is not satisfied.\end{exm}
\begin{exm}\label{ex1.5}  Let $f(z)=\frac{1}{e^{z}+1}$. $P[f]=f^{2}-f^{3}$, $Q[f]=-ff^{'}$. Then clearly $P[f]$ and $Q[f]$ share $1$ CM and $\frac{Q[f]-1}{P[f]-1}=1$, but (\ref{e1.9}) is not satisfied. \end{exm}
\begin{exm}\label{ex1.6}  Let $f(z)=\frac{e^{z}}{e^{z}+1}$. $P[f]=f-f^{'}$, $Q[f]=f^{2}-3f{f^{'}}^{3}+f^{3}{f^{'}}^{2}-ff^{'}f^{'''}+ff^{'}f^{''}$. Then clearly $P[f]$ and $Q[f]$ share $1$ CM and $\frac{Q[f]-1}{P[f]-1}=1$, but (\ref{e1.9}) is not satisfied.\end{exm}
\begin{exm}\label{ex1.7}  Let $f(z)=\frac{1}{e^{z}+1}$. $P[f]=(f^{'})^{2}-ff^{''}$, $Q[f]=2f{f^{'}}^{2}-f^{2}f^{''}$. Then clearly $P[f]$ and $Q[f]$ share $1$ CM and $\frac{Q[f]-1}{P[f]-1}=1$, but (\ref{e1.9}) is not satisfied. Here we note that $3=\underline{d}(Q)>2\ol{d}(P)-\underline{d}(P)=2$.\end{exm}
\begin{exm}\label{ex1.8}  Let $f(z)=\frac{1}{e^{z}+1}$. $P[f]={f^{'}}^{2}$, $Q[f]=ff^{''}-f^{2}f^{'}$. Then clearly $P[f]=Q[f]=\frac{e^{2z}}{{(e^{z}+1)}^{4}}$ share $\frac{1}{z}$ CM and $\frac{Q[f]-\frac{1}{z}}{P[f]-\frac{1}{z}}=1$, but (\ref{e1.9}) is not satisfied. \end{exm}

We now give the next five examples the first two of which show that both the conditions stated in (ii) are essential in order to obtain conclusion (a) in Theorem \ref{t1} for homogeneous differential polynomials $P[f]$ where as the rest three substantiate the same for non homogeneous differential polynomials.
\begin{exm}\label{ex1.9}  Let $f(z)=sin z$. $P[f]={f^{''}}^{2}-{f^{'}}^{2}+2if^{''}f^{'''}$, $Q[f]=f^{2}-2iff^{'}-{f^{'''}}^{2}$. Then clearly $P[f]=-e^{-2iz}$ and $Q[f]=-e^{2iz}$ share $1$ CM. Here $T(r,Q)=\frac{2r}{\pi }+O(1)$, (\ref{e1.9}) is satisfied, but $\frac{Q[f]-1}{P[f]-1}=e^{2iz}$, rather $P[f]Q[f]=1$. \end{exm}
\begin{exm}\label{ex1.10}  Let $f(z)=sin z$. $P[f]=3{f}^{2}+{f^{'}}^{2}-2iff^{'}$, $Q[f]=f^{2}-2iff^{'}-f^{2}$. Then clearly $P[f]=2-e^{2iz}$ and $Q[f]=e^{-2iz}$ share $1$ CM. Here (\ref{e1.9}) is satisfied, but $\frac{Q[f]-1}{P[f]-1}=e^{-2iz}$, rather $P[f]Q[f]-2Q[f]+1=0$. \end{exm}
\begin{exm}\label{ex1.11}  Let $f(z)=cos z$. $P[f]=f^{3}+3if^{'}{f^{'''}}^{2}+3{f^{'}}^{2}f^{''}-3if^{'}-i{f^{'''}}^{3}$, $Q[f]=3f^{''}-4{f^{''}}^{3}+3if^{2}f^{'}+i{f^{'''}}^{3}$. Then clearly $P[f]=e^{3iz}$ and $Q[f]=e^{-3iz}$ share $1$ CM. Here (\ref{e1.9}) is satisfied, but $\frac{Q[f]-1}{P[f]-1}=e^{-3iz}$ rather $P[f]Q[f]=1$. We also note that here $\ol{d}(P)\not =\underline{d}(P)$, $1=\underline{d}(Q)\not>2\ol{d}(P)-\underline{d}(P)=5$. \end{exm}
\begin{exm}\label{ex1.12}  Let $f(z)=cos z$. $P[f]=-2ff^{''}+{f^{'}}^{2}-f^{'}f^{'''}-f^{''}+if^{'''}$, $Q[f]=-f+if^{'''}$. Then clearly $P[f]=e^{iz}+2$ and $Q[f]=-e^{-iz}$ and so they share $1$ CM. Here (\ref{e1.9}) is satisfied, but $\frac{Q[f]-1}{P[f]-1}=-e^{-iz}$, rather $P[f]Q[f]-2Q[f]+1=0$. We also note that here $\ol{d}(P)\not =\underline{d}(P)$, $1=\underline{d}(Q)\not>2\ol{d}(P)-\underline{d}(P)=3$.\end{exm}
\begin{exm}\label{ex1.13}  Let $f(z)=cos z$. $P[f]=-f-if^{'}+(1+i){f^{'}}^{2}+(1+i){f^{''}}^{2}$, $Q[f]=if-f^{'''}$. Then clearly $P[f]=1+i-e^{-iz}$ and $Q[f]=ie^{iz}$ share both $i$ and $1$ CM. Here (\ref{e1.9}) is satisfied and $P[f]Q[f]-(1+i)Q[f]+i=0$. When we consider $i$ as the shared value then $\frac{Q[f]-i}{P[f]-i}=ie^{iz}$, on the other hand when we consider $1$ as the shared value then $\frac{Q[f]-1}{P[f]-1}=e^{iz}$. We also note that here $\ol{d}(P)\not =\underline{d}(P)$, $1=\underline{d}(Q)\not>2\ol{d}(P)-\underline{d}(P)=3$.\end{exm}

The following two examples show that in order to obtain conclusions (a) or (b) of {\it Theorem \ref{t1}}, (\ref{e1.9}) is essential.
\begin{exm}\label{ex1.14}  Let $f(z)=sin z$. $P[f]=if+f^{'}$, $Q[f]=2f^{'}-(f^{2}+{f^{'}}^{2})$. Then clearly $P[f]=e^{iz}$ and $Q[f]=e^{iz}+e^{-iz}-1$ share $1$ IM. Here neither of the conclusions of {\em Theorem \ref{t1}} is satisfied, nor (\ref{e1.9}) is satisfied. We note that $\frac{Q[f]-1}{P[f]-1}=\frac{(e^{iz}-1)}{e^{iz}}$ and $P[f]Q[f]-\lambda Q[f]$ is non-constant function for any $\lambda\in\mathbb{C}$. \end{exm}
\begin{exm}\label{ex1.15}  Let $f(z)=cos z$. $P[f]=f-if^{'}$, $Q[f]=2f-({f^{'}}^{2}+{f^{''}}^{2})$. Then clearly $P[f]=e^{iz}$ and $Q[f]=e^{iz}+e^{-iz}-1$ share $1$ IM. Here neither of the conclusions of {\em Theorem \ref{t1}} is satisfied, nor (\ref{e1.9}) is satisfied. We note that $\frac{Q[f]-1}{P[f]-1}=\frac{(e^{iz}-1)}{e^{iz}}$ and $P[f]Q[f]-\lambda Q[f]$ is non-constant function for any $\lambda\in\mathbb{C}$.\end{exm}

Though we use the standard notations and definitions of the value distribution theory available in \cite{4}, we explain some definitions and notations which are used in the paper.
\begin{defi}\cite{8}Let $p$ be a positive integer and $a\in\mathbb{C}\cup\{\infty\}$.\begin{enumerate}
\item[(i)] $N(r,a;f\mid \geq p)$ ($\ol N(r,a;f\mid \geq p)$)denotes the counting function (reduced counting function) of those $a$-points of $f$ whose multiplicities are not less than $p$.\item[(ii)]$N(r,a;f\mid \leq p)$ ($\ol N(r,a;f\mid \leq p)$)denotes the counting function (reduced counting function) of those $a$-points of $f$ whose multiplicities are not greater than $p$.\end{enumerate} \end{defi}
\begin{defi}\{6, cf.\cite {12}\} For $a\in\mathbb{C}\cup\{\infty\}$ and a positive integer $p$ we denote by $N_{p}(r,a;f)$ the sum $\ol N(r,a;f)+\ol N(r,a;f\mid\geq 2)+\ldots\ol N(r,a;f\mid\geq p)$. Clearly $N_{1}(r,a;f)=\ol N(r,a;f)$. \end{defi}
\begin{defi} Let $k$ be a positive integer and  for $a\in\mathbb{C}-\{0\}$, $\ol E_{k)}(a;f)=\ol E_{k)}(a;g)$. Let $z_{0}$ be a zero of $f(z)-a$ of multiplicity $p$ and a zero of $g(z)-a$ of multiplicity $q$. We denote by $\ol N_{L}(r,a;f)$ the counting function of those $a$-points of $f$ and $g$ where $p>q\geq 1$, by $\ol N_{f>s}(r,a;g)$ ($\ol N_{g>s}(r,a;f)$) the counting functions of those $a$-points of $f$ and $g$ for which $p>q=s$($q>p=s$), by $N^{1)}_{E}(r,a;f)$ the counting function of those $a$-points of $f$ and $g$ where $p=q=1$ and by $\ol N^{(2}_{E}(r,a;f)$ the counting function of those $a$-points of $f$ and $g$ where $p=q\geq 2$, each point in these counting functions is counted only once. In the same way we can define $\ol N_{L}(r,a;g),\; N^{1)}_{E}(r,a;g),\; \ol N^{(2}_{E}(r,a;g).$
We denote by $\ol N_{f\geq k+1}(r,a;f\mid\; g\not=a)$ ($\ol N_{g\geq k+1}(r,a;g\mid\; f\not=a)$) the reduced counting functions of those $a$-points of $f$ and $g$ for which $p\geq k+1$ and $q=0$ ($q\geq k+1$ and $p=0$).\end{defi}
\begin{defi}\cite{7} Let $a,b \in\mathbb{C}\;\cup\{\infty\}$. We denote by $N(r,a;f\mid\; g\neq b)$ the counting function of those $a$-points of $f$, counted according to multiplicity, which are not the $b$-points of $g$.\end{defi}
\begin{defi}\cite{6} Let $f$, $g$ share a value $a$ IM. We denote by $\ol N_{*}(r,a;f,g)$ the reduced counting function of those $a$-points of $f$ whose multiplicities differ from the multiplicities of the corresponding $a$-points of $g$.

Clearly $\ol N_{*}(r,a;f,g)\equiv\ol N_{*}(r,a;g,f)$ and $\ol N_{*}(r,a;f,g)=\ol N_{L}(r,a;f)+\ol N_{L}(r,a;g)$.\end{defi}

\section{Lemmas} In this section we present some lemmas which will be needed in the sequel. Let $F$, $G$ be two non-constant meromorphic functions. Henceforth we shall denote by $H$ the following function. \be\label{e2.1}H=\left(\frac{\;\;F^{''}}{F^{'}}-\frac{2F^{'}}{F-1}\right)-\left(\frac{\;\;G^{''}}{G^{'}}-\frac{2G^{'}}{G-1}\right).\ee
\begin{lem}\label{l2.1a} Let $\ol E_{m)}(1;F)=\ol E_{m)}(1;G)$; $F$, $G$ share $\infty$ IM and $H\not\equiv 0$. Then \beas && N(r,\infty;H)\\&\leq&\ol N(r,0;F\mid\geq 2)+\ol N(r,0;G\mid\geq 2)+\ol N_{*}(r,\infty;F,G)+\ol N_{F\geq m+1}(r,1;F\mid\;G\not=1)\\& &+\ol N_{G\geq m+1}(r,1;G\mid\;F\not=1)+\ol N_{L}(r,1;F)+\ol N_{L}(r,1;G)+\ol N_{0}(r,0;F^{'})+\ol N_{0}(r,0;G^{'}),\eeas where $\ol N_{0}(r,0;F^{'})$ is the reduced counting function of those zeros of $F^{'}$ which are not the zeros of $F(F-1)$ and $\ol N_{0}(r,0;G^{'})$ is similarly defined.\end{lem}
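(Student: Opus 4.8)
The plan is to locate the poles of $H$ by a purely local computation, exploiting the fact that $H$ is assembled from logarithmic-derivative pieces. Writing $H=\left(\frac{F''}{F'}-\frac{2F'}{F-1}\right)-\left(\frac{G''}{G'}-\frac{2G'}{G-1}\right)$ and noting that each of $\frac{F''}{F'}=(\log F')'$ and $\frac{2F'}{F-1}=(\log(F-1)^2)'$ (and their $G$-analogues) is a logarithmic derivative, every pole of $H$ is simple. It therefore suffices to run through the candidate points one family at a time --- zeros of $F$ and of $G$, the $1$-points of $F$ and of $G$, the common poles of $F$ and $G$, and the remaining zeros of $F'$ and $G'$ --- and to record only those at which the principal parts fail to cancel.

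The heart of the matter is the residue bookkeeping at the shared singularities. At a $1$-point of $F$ of multiplicity $p$ one computes $\frac{F''}{F'}-\frac{2F'}{F-1}\sim\frac{-(p+1)}{z-z_0}$, so at a common $1$-point (multiplicity $p$ for $F$, $q$ for $G$) the residue of $H$ is $q-p$, vanishing exactly when $p=q$; similarly, at a common pole of multiplicities $p$ and $q$ the residue is $p-q$, again zero when the multiplicities agree. This is the key cancellation: common $1$-points and common poles of equal multiplicity (in particular all simple common $1$-points) contribute nothing to $N(r,\infty;H)$. Since $F$ and $G$ share $\infty$ IM their poles coincide as sets, so only the differing-multiplicity poles survive, contributing $\ol N_*(r,\infty;F,G)$. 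A zero of $F$ of multiplicity $\ge 2$ forces a zero of $F'$ while $F-1$ stays nonzero, producing a simple pole of $H$ counted by $\ol N(r,0;F\mid\ge 2)$ (and its $G$-analogue); and a zero of $F'$ lying off $F(F-1)$ gives a simple pole counted by $\ol N_0(r,0;F')$, with $\ol N_0(r,0;G')$ defined symmetrically.

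The step I expect to demand the most care is the $1$-point analysis under the truncated hypothesis $\ol E_{m)}(1;F)=\ol E_{m)}(1;G)$, which constrains only the \emph{distinct} $1$-points of multiplicity at most $m$ and says nothing about the multiplicities themselves. I would split a $1$-point $z_0$, with multiplicity $p$ for $F$ and $q$ for $G$ (allowing $q=0$ to signify that $z_0$ is not a $1$-point of $G$), into cases: if $p=q$ there is no pole; if $z_0$ is a common $1$-point with $p\ne q$ the residue $q-p$ is nonzero and $z_0$ is absorbed into $\ol N_L(r,1;F)+\ol N_L(r,1;G)$; and if $z_0$ is a $1$-point of only one function, say of $F$ with $q=0$, then the sharing relation forces $p\ge m+1$ --- a multiplicity $\le m$ point of $F$ would lie in $\ol E_{m)}(1;F)=\ol E_{m)}(1;G)$ and hence be a $1$-point of $G$ --- so $z_0$ falls under $\ol N_{F\ge m+1}(r,1;F\mid G\ne 1)$, and symmetrically for $G$. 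Assembling the surviving simple poles from these disjoint families of points and adjoining the residual terms $\ol N_0(r,0;F')$ and $\ol N_0(r,0;G')$ yields precisely the asserted estimate; because we only need an upper bound, any incidental coincidences between the $F$- and $G$-families are harmless.
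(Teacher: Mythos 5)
Your proposal is correct and follows essentially the same route as the paper: a local enumeration of the possible (necessarily simple) poles of $H$ --- multiple zeros of $F$ and $G$, poles of differing multiplicities, $1$-points of differing multiplicities or belonging to only one function (which the truncated sharing forces to have multiplicity at least $m+1$), and the residual zeros of $F'$ and $G'$. Your version is in fact more detailed than the paper's, since you carry out the residue computations showing the cancellation at equal-multiplicity common $1$-points and common poles explicitly, whereas the paper merely lists the candidate points and asserts the conclusion.
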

\begin{proof} We can easily verify that possible poles of $H$ occur at (i) multiple zeros of $F$ and $G$, (ii) poles of $F$ and $G$ with different multiplicities, (iii) the common zeros of $F-1$ and $G-1$ with different multiplicities, (iii) zeros of $F-1$ ($G-1$) which are not the zeros of $G-1$ ($F-1$), (iv) those $1$-points of $F$ $(G)$  which are not the $1$-points of $G$ $(F)$, (v) zeros of $F^{'}$ which are not the zeros of $F(F-1)$, (vi) zeros of $G^{'}$ which are not zeros of $G(G-1)$. Since $H$ has simple pole the lemma follows from above.\end{proof}
\begin{lem}\label{l2.1}\cite{15} Let $f$ be a non-constant meromorphic function and $k$ be a positive integer, then $$N_{p}(r,0;f^{(k)})\leq N_{p+k}(r,0;f)+k\ol N(r,\infty;f)+S(r,f).$$\end{lem}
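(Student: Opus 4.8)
The plan is to argue by induction on the order $k$, with the entire difficulty concentrated in the base case $k=1$. Concretely, I would first establish that for an arbitrary non-constant meromorphic function $f$ and every positive integer $p$,
\begin{equation}
N_{p}(r,0;f^{'})\leq N_{p+1}(r,0;f)+\ol N(r,\infty;f)+S(r,f).\tag{$\star$}
\end{equation}
Granting $(\star)$, the general statement follows quickly: writing $f^{(k)}=(f^{(k-1)})^{'}$ and applying $(\star)$ to the function $f^{(k-1)}$ gives $N_{p}(r,0;f^{(k)})\leq N_{p+1}(r,0;f^{(k-1)})+\ol N(r,\infty;f^{(k-1)})+S(r,f^{(k-1)})$. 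Since $\ol N(r,\infty;f^{(k-1)})=\ol N(r,\infty;f)$ and, because $T(r,f^{(j)})=O(T(r,f))$, the error term $S(r,f^{(k-1)})$ is absorbed into $S(r,f)$, I would then invoke the induction hypothesis for order $k-1$ at truncation level $p+1$, namely $N_{p+1}(r,0;f^{(k-1)})\leq N_{p+k}(r,0;f)+(k-1)\ol N(r,\infty;f)+S(r,f)$, and add the two estimates to obtain exactly $N_{p}(r,0;f^{(k)})\leq N_{p+k}(r,0;f)+k\ol N(r,\infty;f)+S(r,f)$.

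For the base case I would split the zeros of $f^{'}$ into two families: those lying at the zeros of $f$, and the remaining ``ramification'' zeros at which $f\neq 0,\infty$. At a zero of $f$ of multiplicity $m$ the derivative $f^{'}$ vanishes to order $m-1$, so its truncated contribution to $N_{p}(r,0;f^{'})$ is $\min(m-1,p)$ (and $0$ when $m=1$); a purely arithmetic check shows $\min(m-1,p)+1\leq\min(m,p+1)$ for every $m\geq 1$, so the local part, together with one extra unit per distinct zero of $f$, is dominated by $N_{p+1}(r,0;f)$. To control the ramification zeros I would pass to the logarithmic derivative $f^{'}/f$: its only poles are simple, occurring precisely at the zeros and poles of $f$, whence $N(r,\infty;f^{'}/f)=\ol N(r,0;f)+\ol N(r,\infty;f)$, while its zeros are exactly the ramification zeros of $f^{'}$. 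Using $m(r,f^{'}/f)=S(r,f)$ and the first fundamental theorem, $N(r,0;f^{'}/f)\leq T(r,f^{'}/f)+O(1)=\ol N(r,0;f)+\ol N(r,\infty;f)+S(r,f)$, which bounds the full (hence also the truncated) count of ramification zeros. Adding the two families and observing that the surplus $\ol N(r,0;f)$ produced by the ramification estimate is exactly the ``$+1$ per distinct zero'' already reserved in the arithmetic inequality, the two copies of $\ol N(r,0;f)$ cancel and $(\star)$ drops out.

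The main obstacle is precisely this base case, and the subtlety is that the ramification zeros of $f^{'}$ need not sit anywhere near the zeros of $f$ --- as the example $f=\sin z$, $f^{'}=\cos z$ shows, they can account for a positive proportion of all zeros --- so no purely local, pointwise comparison of $f$ with $f^{'}$ can succeed. The estimate must instead be global, and the delicate point is that the bound on the ramification zeros supplied by the logarithmic derivative carries an unwanted $\ol N(r,0;f)$ which disappears only after it is merged with the multiplicity bookkeeping at the zeros of $f$; keeping this cancellation honest under the truncation at level $p$ is where the care is needed. Once $(\star)$ is secured, the induction, together with the elementary identities $\ol N(r,\infty;f^{(k-1)})=\ol N(r,\infty;f)$ and $S(r,f^{(k-1)})=S(r,f)$, is routine.
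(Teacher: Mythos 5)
Your proposal is correct: the identity $\min(m-1,p)+1=\min(m,p+1)$ (in fact an equality, not just an inequality) handles the zeros of $f$, the logarithmic derivative estimate $N(r,0;f'/f)\leq m(r,f'/f)+N(r,\infty;f'/f)+O(1)=\ol N(r,0;f)+\ol N(r,\infty;f)+S(r,f)$ handles the ramification zeros, and the induction on $k$ with the truncation level shifted to $p+1$ goes through since $\ol N(r,\infty;f^{(k-1)})=\ol N(r,\infty;f)$ and $S(r,f^{(k-1)})=S(r,f)$. The paper itself gives no proof, quoting the lemma from Zhang \cite{15}, and your argument is essentially the standard one found in that source, so there is nothing to compare beyond noting the agreement.
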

\begin{lem}\label{l2.2}\cite{7a} If $N(r,0;f^{(k)}\mid f\not=0)$ denotes the counting function of those zeros of  $f^{(k)}$ which are not the zeros of $f$, where a zero of $f^{(k)}$ is counted according to its multiplicity then $$N(r,0;f^{(k)}\mid f\not=0)\leq k\ol N(r,\infty;f)+N(r,0;f\mid <k)+k\ol N(r,0;f\mid\geq k)+S(r,f).$$\end{lem}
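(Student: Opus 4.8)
The plan is to reduce the whole estimate to a single application of the lemma on logarithmic derivatives together with the first fundamental theorem, applied to the auxiliary function $\psi=f^{(k)}/f$ (assuming $f^{(k)}\not\equiv 0$, since otherwise the statement is vacuous). First I would observe that every zero of $f^{(k)}$ which is not a zero of $f$ is automatically a point where $f$ is finite and non-zero, because a pole of $f$ is a pole, not a zero, of $f^{(k)}$. Such a point is therefore a zero of $\psi$ of exactly the same multiplicity as the zero of $f^{(k)}$, so that $N(r,0;f^{(k)}\mid f\not=0)\leq N(r,0;\psi)$. Passing to an inequality rather than an equality is harmless and in fact convenient, since $\psi$ may acquire extra zeros at low-order zeros of $f$ and we only want an upper bound.

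Next I would bound $N(r,0;\psi)$ by the Nevanlinna characteristic. By the first fundamental theorem, $N(r,0;\psi)=N(r,\infty;1/\psi)\leq T(r,1/\psi)+O(1)=T(r,\psi)+O(1)=m(r,\psi)+N(r,\infty;\psi)+O(1)$. The proximity term is controlled by the lemma on logarithmic derivatives in its iterated form for $k$-th derivatives, which gives $m(r,\psi)=m(r,f^{(k)}/f)=S(r,f)$. Thus the entire problem collapses to a careful accounting of the poles of $\psi$, that is, of $N(r,\infty;\psi)$.

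The heart of the argument, and the step I expect to be the main obstacle, is the local analysis of $\psi=f^{(k)}/f$ at the poles and zeros of $f$, because this is precisely where the three coefficients $k$, $1$, and $k$ of the target inequality are produced. At a pole of $f$ of order $q$ the function $f^{(k)}$ has a pole of order $q+k$, so $\psi$ has a pole of order exactly $k$, contributing $k\ol N(r,\infty;f)$. At a zero of $f$ of order $m\geq k$ one has $\mathrm{ord}(f^{(k)})=m-k$, so $\psi$ again has a pole of order exactly $k$, contributing $k\ol N(r,0;f\mid\geq k)$. At a zero of $f$ of order $m<k$ the situation is more delicate: here $f^{(m)}(z_{0})\not=0$ while $f^{(k)}$ remains holomorphic of some order $\geq 0$, so the order of $\psi$ at $z_0$ is $\mathrm{ord}(f^{(k)})-m\geq -m$, i.e.\ $\psi$ has a pole of order at most $m$ (and possibly none), whence the total contribution is at most $N(r,0;f\mid<k)$.

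Verifying that these are the only poles of $\psi$ then yields $N(r,\infty;\psi)\leq k\ol N(r,\infty;f)+N(r,0;f\mid<k)+k\ol N(r,0;f\mid\geq k)$. Substituting this together with $m(r,\psi)=S(r,f)$ into the bound for $N(r,0;\psi)$ gives the assertion, the $O(1)$ being absorbed into $S(r,f)$. The only genuinely subtle point is the $m<k$ case, where one must check that such a zero cannot contribute more than its multiplicity $m$ to the pole divisor of $\psi$; this follows at once from the holomorphy of $f^{(k)}$, which forces $\mathrm{ord}(f^{(k)})-m\geq -m$.
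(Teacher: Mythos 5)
Your proof is correct, and it is essentially the standard argument for this lemma (which the paper itself does not prove, quoting it from Lahiri--Dewan \cite{7a}): bound $N(r,0;f^{(k)}\mid f\not=0)$ by $N(r,0;f^{(k)}/f)$, apply the first fundamental theorem together with the logarithmic derivative estimate $m(r,f^{(k)}/f)=S(r,f)$, and account for the poles of $f^{(k)}/f$, which occur with order exactly $k$ at poles of $f$ and at zeros of $f$ of multiplicity $m\geq k$, and with order at most $m$ at zeros of multiplicity $m<k$. Your local computations (including the subtle case $m<k$, where holomorphy of $f^{(k)}$ gives $\mathrm{ord}(f^{(k)}/f)\geq -m$) are all accurate, so there is no gap.
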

\begin{lem}\label{l2.3}\cite{9} Let $f$ be a non-constant meromorphic function and let \[R(f)=\frac{\sum\limits _{k=0}^{n} a_{k}f^{k}}{\sum \limits_{j=0}^{m} b_{j}f^{j}}\] be an irreducible rational function in $f$ with constant coefficients $\{a_{k}\}$ and $\{b_{j}\}$ where $a_{n}\not=0$ and $b_{m}\not=0$. Then $$T(r,R(f))=dT(r,f)+S(r,f),$$ where $d=\max\{n,m\}$.\end{lem}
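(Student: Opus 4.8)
The plan is to prove the identity as a two-sided estimate, establishing $T(r,R(f))\le d\,T(r,f)+O(1)$ and $T(r,R(f))\ge d\,T(r,f)+S(r,f)$ separately. Throughout I write $R(w)=P(w)/Q(w)$ with $P(w)=a_{n}\prod_{i}(w-\alpha_{i})^{n_{i}}$ and $Q(w)=b_{m}\prod_{j}(w-\beta_{j})^{m_{j}}$, where irreducibility guarantees that no $\alpha_{i}$ coincides with any $\beta_{j}$; here $\sum_{i}n_{i}=n$, $\sum_{j}m_{j}=m$, and $d=\max\{n,m\}$.

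For the upper bound I would combine an exact count of poles with a matching proximity estimate, pairing the two contributions value by value through the First Main Theorem. Since $P(\beta_{j})\ne 0$, a point at which $f-\beta_{j}$ has a zero of order $s$ forces a pole of $R(f)$ of order exactly $m_{j}s$, while a pole of $f$ of order $p$ produces a pole of $R(f)$ of order exactly $(n-m)^{+}p$; as these are the only poles of $R(f)$, one obtains the exact formula $N(r,\infty;R(f))=(n-m)^{+}N(r,\infty;f)+\sum_{j}m_{j}N(r,\beta_{j};f)+O(1)$. For the proximity term I would verify the pointwise inequality $\log^{+}|R(w)|\le(n-m)^{+}\log^{+}|w|+\sum_{j}m_{j}\log^{+}\frac{1}{|w-\beta_{j}|}+O(1)$, which holds because the left side minus the right is continuous and bounded above near each $\beta_{j}$, near $\infty$, and on the remaining compact set; integrating gives $m(r,R(f))\le(n-m)^{+}m(r,\infty;f)+\sum_{j}m_{j}\,m\!\left(r,\frac{1}{f-\beta_{j}}\right)+O(1)$. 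Adding the two displays and applying $m(r,a;f)+N(r,a;f)=T(r,f)+O(1)$ for each of $a=\infty$ and $a=\beta_{j}$ collapses the right-hand side to $[(n-m)^{+}+m]\,T(r,f)+O(1)=d\,T(r,f)+O(1)$.

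For the lower bound I would pass through the Second Main Theorem. For every $c$ outside a finite set (the critical values of $R$, together with $R(\infty)$ when $n=m$) the polynomial $P-cQ$ has degree $d$ with $d$ distinct simple roots $w_{1}(c),\dots,w_{d}(c)$; simplicity gives $R'(w_{i}(c))\ne 0$, so multiplicities are preserved and $N(r,c;R(f))=\sum_{i=1}^{d}N(r,w_{i}(c);f)+O(1)$. Since $T(r,R(f))=T(r,R(f)-c)+O(1)\ge N(r,c;R(f))$, this yields $T(r,R(f))\ge\sum_{i=1}^{d}N(r,w_{i}(c);f)+O(1)$ for each admissible $c$. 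Distinct admissible values have disjoint preimages, so choosing $q$ of them $c_{1},\dots,c_{q}$ produces $qd$ distinct points $w_{i}(c_{k})$; summing over $k$ gives $q\,T(r,R(f))\ge\sum_{k,i}N(r,w_{i}(c_{k});f)+O(1)$, and since $N\ge\overline N$, the Second Main Theorem for these $qd$ values bounds the right side below by $(qd-2)\,T(r,f)+S(r,f)$. Hence $T(r,R(f))\ge\bigl(d-\tfrac{2}{q}\bigr)T(r,f)+S(r,f)$ for every fixed $q$.

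Combining the two estimates finishes the proof: the upper bound forces $\limsup T(r,R(f))/T(r,f)\le d$, while letting $q\to\infty$ in the lower bound forces $\liminf\ge d$, so that $T(r,R(f))=d\,T(r,f)+S(r,f)$. I expect the lower bound to be the main obstacle. The upper bound is entirely local and elementary, but recovering the full constant $d$ (rather than the $d-2$ that a single application of the Second Main Theorem delivers) genuinely requires averaging over arbitrarily many fibres together with the disjointness of preimages, and it is precisely this averaging that leaves the $S(r,f)$ error in place of the $O(1)$ attainable by a sharper classical argument.
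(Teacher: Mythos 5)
The paper itself gives no proof of this lemma: it is quoted verbatim from Mokhon'ko \cite{9} (the Valiron--Mokhon'ko theorem), so your argument can only be measured against the classical one. Your route is genuinely different from the standard proof. The classical argument is entirely elementary: it uses only the first fundamental theorem together with an algebraic decomposition of $R$ (polynomial part plus partial fractions, or induction on $\deg R$), and for constant coefficients it yields the sharper conclusion $T(r,R(f))=dT(r,f)+O(1)$. Your upper bound is essentially that classical half --- the exact pole count plus the pointwise estimate $\log^{+}|R(w)|\le (n-m)^{+}\log^{+}|w|+\sum_{j}m_{j}\log^{+}\frac{1}{|w-\beta_{j}|}+O(1)$ is correct, and $(n-m)^{+}+m=d$ closes it --- but your lower bound replaces the elementary algebra with the second main theorem averaged over many fibres $R^{-1}(c_{k})$. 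This buys a conceptually transparent argument (disjointness of fibres plus $N\ge\ol N$ does all the work) at the cost of the weaker error term $S(r,f)$, which you correctly anticipate; since the lemma only claims $S(r,f)$, that loss is harmless here.

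Two points in the lower bound need repair, though neither is fatal. First, your finite exceptional set for $c$ is slightly wrong: you must discard $R(\infty)$ whenever it is \emph{finite}, i.e.\ whenever $n\le m$, not only when $n=m$; for $n<m$ the value $c=0=R(\infty)$ makes $\deg(P-cQ)=n<d$, so your claim that $P-cQ$ has $d$ simple roots fails there, and poles of $f$ would then contribute $c$-points of $R(f)$. Enlarging the excluded set fixes this. Second, and more substantively, ``letting $q\to\infty$'' glosses over the fact that each application of the second main theorem carries its own error term and its own exceptional set $E_{q}$ of finite linear measure; the union $\bigcup_{q}E_{q}$ may have infinite measure, so the estimates for different $q$ do not immediately combine into a single relation $T(r,R(f))\ge dT(r,f)+S(r,f)$. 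One needs a diagonal selection: choose $r_{q}\uparrow\infty$ with the measure of $E_{q}\cap[r_{q},\infty)$ at most $2^{-q}$ and with the $q$-th error at most $T(r,f)/q$ off $E_{q}$ for $r\ge r_{q}$, then take $E=\bigcup_{q}\bigl(E_{q}\cap[r_{q},r_{q+1})\bigr)$, which has finite measure and on whose complement $T(r,R(f))-dT(r,f)\ge -\tfrac{3}{q}T(r,f)$ for $r\in[r_{q},r_{q+1})$. With this standard (but necessary) bookkeeping, your proof is complete.
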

\begin{lem} \label{l2.4}\cite{3a} Let $f$ be a meromorphic function and $P[f]$ be a differential polynomial. Then
$$ m\left(r,\frac{P[f]}{f^{\ol {d}(P)}}\right)\leq (\ol {d}(P)-\underline {d}(P)) m\left(r,\frac{1}{f}\right)+S(r,f).$$\end{lem}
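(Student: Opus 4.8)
The plan is to exploit the multiplicative factorisation of each monomial together with the lemma on the logarithmic derivative; the one genuine subtlety I anticipate is preventing the number $t$ of summands from entering as a multiplicative constant in front of $m(r,1/f)$. First I would set $d_j=d(M_j)=\sum_{i=0}^k n_{ij}$ and factor each monomial as $M_j[f]=f^{d_j}\prod_{i=0}^k\left(f^{(i)}/f\right)^{n_{ij}}$, so that, since $d_j\le\ol{d}(P)$,
\[
\frac{P[f]}{f^{\ol{d}(P)}}=\sum_{j=1}^t b_j\left(\frac1f\right)^{\ol{d}(P)-d_j}\prod_{i=0}^k\left(\frac{f^{(i)}}{f}\right)^{n_{ij}}=:\sum_{j=1}^t g_j.
\]
Using $\log^+|xy|\le\log^+|x|+\log^+|y|$ I would obtain for a single summand $m(r,g_j)\le(\ol{d}(P)-d_j)\,m(r,1/f)+S(r,f)$, where the contributions of $b_j$ and of the logarithmic derivatives $f^{(i)}/f$ are $S(r,f)$ by the lemma on the logarithmic derivative. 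The hard part is that naively adding these $t$ estimates would produce the coefficient $\sum_j(\ol{d}(P)-d_j)$, a constant as large as $t(\ol{d}(P)-\underline{d}(P))$, which is far worse than claimed.

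To remove this spurious factor I would argue pointwise on the circle $|z|=r$. For each $r$ I would choose a measurable partition $E_1(r),\dots,E_t(r)$ of $[0,2\pi)$ on which $|g_j|$ is the largest of the $|g_l|$ (breaking ties by least index). Then on $E_j(r)$ one has $|P[f]/f^{\ol{d}(P)}|\le t\,|g_j|$, whence $\log^+|P[f]/f^{\ol{d}(P)}|\le\log t+\log^+|g_j|$. Integrating over $E_j(r)$ and summing over $j$ I would reach
\[
m\left(r,\frac{P[f]}{f^{\ol{d}(P)}}\right)\le\log t+\sum_{j=1}^t\frac1{2\pi}\int_{E_j(r)}\log^+|g_j|\,d\theta.
\]

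Finally, on each piece I would expand $\log^+|g_j|\le\log^+|b_j|+(\ol{d}(P)-d_j)\log^+|1/f|+\sum_{i=1}^k n_{ij}\log^+|f^{(i)}/f|$. For the dominant middle term I would use $\ol{d}(P)-d_j\le\ol{d}(P)-\underline{d}(P)$ for every $j$ together with the fact that the disjoint sets $E_j(r)$ exhaust $[0,2\pi)$, so that $\sum_j\int_{E_j(r)}\log^+|1/f|\,d\theta=\int_0^{2\pi}\log^+|1/f|\,d\theta=2\pi\,m(r,1/f)$; this is exactly the step at which the factor $t$ disappears, and the middle term is then seen to contribute at most $(\ol{d}(P)-\underline{d}(P))\,m(r,1/f)$. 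The remaining pieces I would bound crudely, extending each integral back to the full circle and using $\sum_i n_{ij}\le\ol{d}(P)$, so that the $b_j$-terms sum to $\sum_j m(r,b_j)=S(r,f)$ and the $f^{(i)}/f$-terms amount to at most $\ol{d}(P)\sum_{i=1}^k m(r,f^{(i)}/f)=S(r,f)$. Absorbing the constant $\log t$ into $S(r,f)$ would then yield the asserted estimate. I expect the decomposition of the circle to be the crucial and only delicate point; everything else is the standard logarithmic derivative machinery.
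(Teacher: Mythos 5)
The paper gives no proof of this lemma at all --- it is imported verbatim from Chuang \cite{3a} --- so there is no internal argument to compare against; judged on its own, your proof is correct, and it is in substance the classical one. Your measurable partition of the circle into the sets $E_j(r)$ on which $|g_j|$ dominates is exactly a device for integrating the pointwise bound $\log^+\bigl|\sum_j g_j\bigr|\leq \log t+\max_j \log^+|g_j|$, and the step you rightly single out as decisive --- replacing $\ol{d}(P)-d_j$ by $\ol{d}(P)-\underline{d}(P)$ \emph{before} integrating, so that the coefficient of $\log^+(1/|f|)$ is the same on every piece and the disjoint pieces reassemble into a single $m(r,1/f)$ --- is precisely what makes the estimate close without the spurious factor $t$; since $\log^+(1/|f|)\geq 0$, this replacement is even valid pointwise in every summand simultaneously, so the partition, while correct, is slightly heavier machinery than needed. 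Two harmless bookkeeping slips: the naive coefficient from termwise addition is at most $(t-1)(\ol{d}(P)-\underline{d}(P))$, not $t(\ol{d}(P)-\underline{d}(P))$, since $d_j=\ol{d}(P)$ for at least one $j$; and after summing over $j$ the logarithmic-derivative terms are bounded by $\sum_{j}\sum_{i} n_{ij}\,m\bigl(r,f^{(i)}/f\bigr)$, which can exceed your stated $\ol{d}(P)\sum_{i} m\bigl(r,f^{(i)}/f\bigr)$ by a factor of $t$ --- both are $S(r,f)$ by the lemma on the logarithmic derivative (together with $T(r,f^{(i)})=O(T(r,f))$ outside a set of finite measure), so nothing in the conclusion is affected.
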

\begin{lem} \label{l2.5} Let $f$ be a meromorphic function and $P[f]$ be a differential polynomial. Then we have
\beas N\left(r,\infty;\frac{P[f]}{f^{\ol {d}(P)}}\right)&\leq& (\Gamma _{P}-\ol {d}(P))\;\ol N(r,\infty;f)+(\ol {d}(P)-\underline {d} (P))\; N(r,0;f\mid\geq k+1)\\&&+\mu \ol N(r,0;f\mid\geq k+1)+\ol {d}(P) N(r,0;f\mid\leq k)+S(r,f).\eeas \end{lem}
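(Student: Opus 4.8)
The plan is to estimate the poles of the quotient $P[f]/f^{\ol{d}(P)}=\sum_{j=1}^{t}b_{j}\,M_{j}[f]/f^{\ol{d}(P)}$ by a purely local analysis at each candidate pole, since $T(r,b_{j})=S(r,f)$ means that all zeros and poles of the coefficients $b_{j}$ contribute only $S(r,f)$ and may be discarded. A pole of the quotient can arise only at a pole of $f$, at a zero of $f$, or at a zero/pole of some $b_{j}$; the third kind being absorbed into $S(r,f)$, it suffices to track the first two. For the pole order of the sum I will use that it is bounded above by the maximum of the pole orders of the individual terms $M_{j}[f]/f^{\ol{d}(P)}$, so the estimate reduces to a single-monomial computation and then a maximum over $j$.

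First I would treat a pole of $f$ of order $p\geq 1$. There each $f^{(i)}$ has a pole of order $p+i$, so $M_{j}[f]$ has a pole of order $\sum_{i}(p+i)n_{ij}=p\,d(M_{j})+(\Gamma_{M_{j}}-d(M_{j}))$, while $f^{\ol{d}(P)}$ has a pole of order $p\,\ol{d}(P)$. Hence the order of $M_{j}[f]/f^{\ol{d}(P)}$ at such a point is $p\big(d(M_{j})-\ol{d}(P)\big)+(\Gamma_{M_{j}}-d(M_{j}))$; since $d(M_{j})\leq\ol{d}(P)$ and $p\geq 1$, this is at most $(d(M_{j})-\ol{d}(P))+(\Gamma_{M_{j}}-d(M_{j}))=\Gamma_{M_{j}}-\ol{d}(P)\leq\Gamma_{P}-\ol{d}(P)$. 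Counting each pole of $f$ once therefore gives the contribution $(\Gamma_{P}-\ol{d}(P))\,\ol N(r,\infty;f)$.

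Next I would split the zeros of $f$. At a zero of order $q\geq k+1$ every $f^{(i)}$ with $0\leq i\leq k$ vanishes to order $q-i\geq 1$, so $M_{j}[f]$ has a zero of order $q\,d(M_{j})-(\Gamma_{M_{j}}-d(M_{j}))$, and the quotient has a pole of order at most $q\big(\ol{d}(P)-d(M_{j})\big)+(\Gamma_{M_{j}}-d(M_{j}))\leq q(\ol{d}(P)-\underline{d}(P))+\mu$. Summing, the $q$-weighted part yields $(\ol{d}(P)-\underline{d}(P))\,N(r,0;f\mid\geq k+1)$ and the constant part yields $\mu\,\ol N(r,0;f\mid\geq k+1)$. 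At a zero of order $q\leq k$ some derivatives $f^{(i)}$ need not vanish, so $M_{j}[f]$ is holomorphic there and the only pole comes from the denominator, of order at most $q\,\ol{d}(P)$; this sums to $\ol{d}(P)\,N(r,0;f\mid\leq k)$. Adding the three contributions together with the $S(r,f)$ from the coefficients gives exactly the asserted bound.

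The routine part is the order arithmetic for a single monomial; the point demanding care is to organize the zeros of $f$ according to whether their order exceeds $k$, because the behaviour of the $f^{(i)}$ is genuinely different in the two ranges — all derivatives up to order $k$ vanish only when $q\geq k+1$ — and to keep the reduced versus non-reduced counting functions correctly matched to the factors $\mu$ and $\ol{d}(P)-\underline{d}(P)$ respectively. A secondary subtlety is the inequality $p\big(d(M_{j})-\ol{d}(P)\big)\leq d(M_{j})-\ol{d}(P)$ used at poles of $f$, which collapses the dependence on $p$ and is precisely what produces the reduced counting function $\ol N(r,\infty;f)$ rather than a full one.
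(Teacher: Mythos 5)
Your proposal is correct and follows essentially the same route as the paper's own proof: a local computation of pole orders at poles of $f$ (where the dependence on the pole order collapses, giving the reduced counting function with factor $\Gamma_P-\ol{d}(P)$), at zeros of $f$ of multiplicity $\geq k+1$ (giving the terms with factors $\ol{d}(P)-\underline{d}(P)$ and $\mu$), and at zeros of multiplicity $\leq k$, with the coefficient zeros and poles absorbed into $S(r,f)$. The only cosmetic difference is that you bound the sum by the maximum over the monomials $M_j$ explicitly, which the paper does implicitly.
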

\begin{proof} Let $z_{0}$ be a pole of $f$ of order $r$, such that $b_{j}(z_{0})\not=0,\infty: 1\leq j\leq t$. Then it would be a pole of $P[f]$ of order at most $r\ol {d}(P)+\Gamma _{P}-\ol {d}(P)$. Since $z_{0}$ is a pole of $f^{\ol {d}(P)}$ of order $r\ol {d}(P)$, it follows that $z_{0}$ would be a pole of $\frac{P[f]}{f^{\ol {d}(P)}}$ of order at most $\Gamma _{P}-\ol {d}(P)$.
Next suppose $z_{1}$ is a zero of $f$ of order $s(>k)$, such that $b_{j}(z_{1})\not=0,\infty: 1\leq j\leq t$. Clearly it would be a zero of $M_{j}(f)$ of order $s.n_{0j}+(s-1)n_{1j}+\ldots+(s-k)n_{kj}= s.d(M_{j})-(\Gamma _{M_{j}}-d(M_{j}))$.
 Hence $z_{1}$ be a pole of $\frac{M_{j}[f]}{f^{\ol {d}(P)}}$ of order $$s.\ol {d}(P)-s.d(M_{j})+(\Gamma _{M_{j}}-d(M_{j}))=s(\ol {d}(P)-d(M_{j}))+(\Gamma _{M_{j}}-d(M_{j})).$$ So $z_{1}$ would be a pole of $\frac{P[f]}{f^{\ol {d}(P)}}$ of order at most $$\text{max} \{s(\ol {d}(P)-d(M_{j}))+(\Gamma _{M_{j}}-d(M_{j})): 1\leq j\leq t)\}=s(\ol {d}(P)-\underline{d}(P))+\mu .$$
If $z_{1}$ is a zero of $f$ of order $s\leq k$, such that $b_{j}(z_{1})\not=0,\infty: 1\leq j\leq t$ then it would be a pole of  $\frac{P[f]}{f^{\ol {d}(P)}}$ of order $s\ol {d}(P)$. Since the poles of $\frac{P[f]}{f^{\ol {d}(P)}}$ comes from the poles or zeros of $f$ and poles or zeros of $b_{j}(z)$'s only, it follows that
\beas  N\left(r,\infty;\frac{P[f]}{f^{\ol {d}(P)}}\right)&\leq& (\Gamma _{P}-\ol {d}(P))\;\ol N(r,\infty;f)+(\ol {d}(P)-\underline {d} (P))\; N(r,0;f\mid \geq k+1)\\&&+\mu \;\ol N(r,0;f\mid\geq k+1)+\ol {d}(P) N(r,0;f\mid\leq k)+S(r,f).\eeas \end{proof}
\begin{lem}\label{l2.6} \cite{3b} Let $P[f]$ be a differential polynomial. Then $$T(r,P[f])\leq\Gamma_{P}T(r,f)+S(r,f).$$\end{lem}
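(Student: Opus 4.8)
The plan is to split the characteristic function as $T(r,P[f])=m(r,P[f])+N(r,P[f])$ and to bound the two pieces separately, using the two degree quantities in a complementary way: the proximity function will be controlled by the smaller quantity $\ol{d}(P)$, while the pole-counting function will be controlled by the weight $\Gamma_P$. Since $d(M_j)=\sum_i n_{ij}\le\sum_i(i+1)n_{ij}=\Gamma_{M_j}$ for every $j$, we have $\ol{d}(P)\le\Gamma_P$, so a bound of the shape $m(r,P[f])\le\ol{d}(P)\,m(r,f)+S(r,f)$ together with $N(r,P[f])\le\Gamma_P\,N(r,f)+S(r,f)$ will combine to
\[
T(r,P[f])\le\ol{d}(P)\,m(r,f)+\Gamma_P\,N(r,f)+S(r,f)\le\Gamma_P\,T(r,f)+S(r,f),
\]
which is exactly the assertion. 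Throughout I would discard the points where some $b_j$ vanishes or has a pole, since these contribute at most $\sum_j\big(N(r,0;b_j)+N(r,\infty;b_j)\big)=S(r,f)$.

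For the proximity function the naive estimate $m(r,\sum_j b_jM_j[f])\le\log t+\sum_j m(r,b_jM_j[f])$ is too wasteful: writing $M_j[f]=f^{d(M_j)}\prod_{i\ge1}(f^{(i)}/f)^{n_{ij}}$ and applying the lemma on the logarithmic derivative factor by factor would produce $\sum_j d(M_j)\,m(r,f)$, i.e. a spurious factor of $t$. The hard part, and the key step, is to avoid this by taking the maximum \emph{before} splitting. Estimating $\log^+|P[f]|\le\log t+\max_j\log^+|b_jM_j[f]|$ and then using $\log^+|M_j[f]|\le d(M_j)\log^+|f|+\sum_{i\ge1}n_{ij}\log^+|f^{(i)}/f|\le\ol{d}(P)\log^+|f|+\sum_{i\ge1}n_{ij}\log^+|f^{(i)}/f|$, the coefficient $\ol{d}(P)$ of $\log^+|f|$ is uniform in $j$ and hence survives the maximum exactly once, whereas all the logarithmic-derivative terms, even when crudely summed over all $j$ and $i$, contribute only $S(r,f)$ by the logarithmic derivative lemma. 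Integrating over $|z|=r$ then yields $m(r,P[f])\le\ol{d}(P)\,m(r,f)+S(r,f)$.

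For the pole-counting function I would argue pointwise. Poles of $P[f]$ can arise only at poles of $f$ and at poles of the coefficients $b_j$, the latter being negligible. At a pole $z_0$ of $f$ of multiplicity $q\ge1$ (with every $b_j$ finite and nonzero there) each derivative $f^{(i)}$ has a pole of order $q+i$, so $M_j[f]$ has a pole of order $\sum_i n_{ij}(q+i)=q\,d(M_j)+(\Gamma_{M_j}-d(M_j))$. Hence $P[f]$ has at $z_0$ a pole of order at most $\max_j\big[(q-1)d(M_j)+\Gamma_{M_j}\big]$, and since $0\le d(M_j)\le\Gamma_{M_j}\le\Gamma_P$ this maximum is at most $(q-1)\Gamma_P+\Gamma_P=q\,\Gamma_P$. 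Summing over all poles of $f$ and absorbing the excluded $b_j$-points gives $N(r,P[f])\le\Gamma_P\,N(r,f)+S(r,f)$, and combining with the proximity estimate as indicated above completes the proof. The only delicate point to get right is the uniform-in-$j$ bookkeeping in the proximity step, which is what keeps the leading constant at $\Gamma_P$ rather than $t\,\Gamma_P$; everything else is a direct multiplicity count.
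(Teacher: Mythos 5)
The paper offers no proof of this lemma at all---it is quoted from Doeringer \cite{3b}---and your argument is the standard proof underlying the cited result, so there is nothing genuinely different to compare. Your proposal is correct as written: the split $T(r,P[f])=m(r,P[f])+N(r,P[f])$, the max-before-sum bookkeeping that yields $m(r,P[f])\leq \ol{d}(P)\,m(r,f)+S(r,f)$ via the lemma on the logarithmic derivative (and which is indeed necessary, since summing $m(r,b_jM_j[f])$ over $j$ can exceed $\Gamma_P\,m(r,f)$), and the pointwise count giving pole order at most $q\,d(M_j)+(\Gamma_{M_j}-d(M_j))\leq q\,\Gamma_P$ at a pole of $f$ of order $q$, combine with $\ol{d}(P)\leq\Gamma_P$ exactly as you indicate, the coefficient contributions being absorbed into $S(r,f)$ because $T(r,b_j)=S(r,f)$.
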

\begin{lem} \label{l2.7} Let $f$ be a non-constant meromorphic function and $P[f]$ be a differential polynomial. Then $S(r,P[f])$ can be replaced by $S(r,f)$.\end{lem}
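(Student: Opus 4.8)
The plan is to read the statement in the sense in which it is used: that every error term of type $S(r,P[f])$ is also an error term of type $S(r,f)$, so that throughout the paper the symbol $S(r,P[f])$ arising whenever one applies Nevanlinna theory directly to $P[f]$ may be rewritten as $S(r,f)$. In precise terms I want to show $o(T(r,P[f]))\subseteq o(T(r,f))$ as $r\to\infty$ outside a set of finite linear measure. The whole argument rests on a single comparison of characteristic functions, which is already supplied by Lemma \ref{l2.6}, so no new machinery is needed.

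First I would invoke Lemma \ref{l2.6}, namely $T(r,P[f])\leq \Gamma_{P}\,T(r,f)+S(r,f)$. Since $\Gamma_{P}=\max\{\Gamma_{M_{j}}:1\leq j\leq t\}$ is a fixed finite constant determined by $P$, and $S(r,f)=o(T(r,f))$, this gives $T(r,P[f])\leq(\Gamma_{P}+o(1))\,T(r,f)$, that is, $T(r,P[f])=O(T(r,f))$ as $r\to\infty$ off an exceptional set $E_{1}$ of finite linear measure. (Here I tacitly restrict to nonconstant $P[f]$, so that $T(r,P[f])\to\infty$ and the symbol $S(r,P[f])$ is meaningful; the constant or small-function case is trivial.)

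Next, let $\phi(r)$ be any quantity with $\phi(r)=S(r,P[f])$, i.e. $\phi(r)=o(T(r,P[f]))$ as $r\to\infty$, $r\notin E_{2}$, for some $E_{2}$ of finite linear measure. On the union $E_{1}\cup E_{2}$, which is again of finite linear measure, I would write
$$\frac{|\phi(r)|}{T(r,f)}=\frac{|\phi(r)|}{T(r,P[f])}\cdot\frac{T(r,P[f])}{T(r,f)},$$
and observe that the first factor tends to $0$ while the second stays bounded by $\Gamma_{P}+o(1)$; hence $\phi(r)=o(T(r,f))=S(r,f)$. This is exactly the asserted replacement, and it completes the proof.

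The computation is routine; the one point I would flag as the main obstacle is the bookkeeping of exceptional sets, since $S(r,f)$ and $S(r,P[f])$ a priori come with different sets $E$, and one must merge them (a finite union of finite-measure sets) before estimating the ratio. I would also note explicitly that the reverse inclusion $S(r,f)\subseteq S(r,P[f])$ is neither claimed nor used: it would require a lower bound of the form $T(r,f)=O(T(r,P[f]))$, which is not available from Lemma \ref{l2.6} and need not hold for differential polynomials assembled purely from derivatives of $f$. The one-sided nature of the lemma is therefore intentional and is precisely what the subsequent estimates require.
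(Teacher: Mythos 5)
Your proposal is correct and takes essentially the same route as the paper: the paper's entire proof is the observation that Lemma \ref{l2.6} yields $T(r,P[f])=O(T(r,f))$, from which the replacement of $S(r,P[f])$ by $S(r,f)$ follows, and your write-up simply supplies the routine ratio estimate and exceptional-set bookkeeping that the paper leaves implicit. One wording slip: where you write ``On the union $E_{1}\cup E_{2}$'' you clearly mean \emph{outside} that union, which is how the rest of your argument reads.
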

\begin{proof} From {\it Lemma \ref{l2.6}} it is clear that $T(r,P[f])=O(T(r,f))$ and so the lemma follows. \end{proof}
\begin{lem} \label{l2.8} Let $f$ be a non-constant meromorphic function and $P[f]$, $Q[f]$ be two differential polynomials. Then \beas && N(r,0;P[f])\\&\leq& \frac{\ol {d}(P)-\underline {d}(P)}{\underline {d}(Q)} m\left(r,\frac{1}{Q[f]}\right)+(\Gamma _{P}-\ol {d}(P))\;\ol N(r,\infty;f)+(\ol {d}(P)-\underline {d} (P))\; N(r,0;f\mid\geq k+1)\\&&+\mu \ol N(r,0;f\mid\geq k+1)+\ol {d}(P) N(r,0;f\mid\leq k)+S(r,f).\eeas \end{lem}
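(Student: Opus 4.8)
The plan is to factor the highest power of $f$ out of $P[f]$ and run the First Main Theorem on the resulting quotient, controlling its proximity function by \emph{Lemma \ref{l2.4}} and its poles by \emph{Lemma \ref{l2.5}}. Concretely, I set $G:=P[f]/f^{\ol{d}(P)}$, so that $P[f]=f^{\ol{d}(P)}\,G$. Since a zero of $P[f]$ lying away from the zeros and poles of $f$ is exactly a zero of $G$ of the same multiplicity, the first task is to absorb the zeros of $P[f]$ sitting over zeros of $f$ into the quantities already recorded in \emph{Lemma \ref{l2.5}}, i.e.\ to justify
\[
N(r,0;P[f]) \le N(r,0;G) + S(r,f).
\]

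Granting this, I would apply the First Main Theorem to $G$, using $N(r,0;G)\le T(r,1/G)=T(r,G)+O(1)=m(r,G)+N(r,\infty;G)+O(1)$. The pole term is bounded directly by \emph{Lemma \ref{l2.5}}, whose right-hand side is precisely
\[
N(r,\infty;G)\le(\Gamma_P-\ol{d}(P))\ol N(r,\infty;f)+(\ol{d}(P)-\underline{d}(P))N(r,0;f\mid\ge k+1)+\mu\,\ol N(r,0;f\mid\ge k+1)+\ol{d}(P)N(r,0;f\mid\le k)+S(r,f),
\]
that is, exactly the block of counting functions appearing in the assertion. For the proximity term, \emph{Lemma \ref{l2.4}} applied to $P$ gives $m(r,G)=m\!\left(r,P[f]/f^{\ol{d}(P)}\right)\le(\ol{d}(P)-\underline{d}(P))\,m\!\left(r,1/f\right)+S(r,f)$.

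It then remains to convert $m(r,1/f)$ into $m(r,1/Q[f])$, which is where the hypothesis that $Q[f]$ is a genuine differential polynomial enters. I would again invoke \emph{Lemma \ref{l2.4}}, now for $Q$: from $\frac{1}{f^{\ol{d}(Q)}}=\frac{1}{Q[f]}\cdot\frac{Q[f]}{f^{\ol{d}(Q)}}$ and subadditivity of $m$,
\[
\ol{d}(Q)\,m\!\left(r,\tfrac1f\right)\le m\!\left(r,\tfrac1{Q[f]}\right)+m\!\left(r,\tfrac{Q[f]}{f^{\ol{d}(Q)}}\right)\le m\!\left(r,\tfrac1{Q[f]}\right)+(\ol{d}(Q)-\underline{d}(Q))\,m\!\left(r,\tfrac1f\right)+S(r,f),
\]
so that $\underline{d}(Q)\,m(r,1/f)\le m(r,1/Q[f])+S(r,f)$. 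Feeding this back produces the coefficient $\frac{\ol{d}(P)-\underline{d}(P)}{\underline{d}(Q)}$ in front of $m(r,1/Q[f])$, and chaining the three displays yields the stated inequality; \emph{Lemma \ref{l2.7}} allows me to keep every error term as $S(r,f)$.

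The main obstacle is the very first reduction, $N(r,0;P[f])\le N(r,0;G)+S(r,f)$. This is a purely local bookkeeping problem over the zeros of $f$: at a zero $z_0$ of $f$ of order $a$ the factor $f^{\ol{d}(P)}$ contributes a zero of order $a\,\ol{d}(P)$ while $G$ simultaneously develops a pole, so these zeros of $P[f]$ are invisible to $N(r,0;G)$ and must be matched, multiplicity by multiplicity, against the counting already performed in \emph{Lemma \ref{l2.5}}. Carrying this out requires splitting the zeros of $f$ into those of order $\le k$ and those of order $\ge k+1$ and using $\mathrm{ord}_{z_0}(M_j[f])=a\,d(M_j)-(\Gamma_{M_j}-d(M_j))$ for $a\ge k+1$, and it also demands care about possible cancellation when several lowest-degree monomials attain the minimal order at $z_0$. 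This comparison, rather than the Nevanlinna-theoretic estimates, is where the genuine work of the lemma lies.
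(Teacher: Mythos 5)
Your overall architecture coincides with the paper's: the paper also works with $G=P[f]/f^{\ol{d}(P)}$, bounds $N(r,\infty;G)$ by \emph{Lemma \ref{l2.5}}, bounds the proximity function by \emph{Lemma \ref{l2.4}}, and then trades $m(r,1/f)$ for $\frac{1}{\underline{d}(Q)}m\left(r,1/Q[f]\right)$. One genuine (small) improvement on your side: you obtain $\underline{d}(Q)\,m(r,1/f)\le m(r,1/Q[f])+S(r,f)$ from \emph{Lemma \ref{l2.4}} applied to $Q$ plus subadditivity of $m$, whereas the paper reproves this inequality from scratch by an integral estimate over $\{|f|\le 1\}$ and its complement.

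The gap is exactly the step you postponed: the reduction $N(r,0;P[f])\le N(r,0;G)+S(r,f)$ is not deferrable bookkeeping — it is false whenever $\underline{d}(P)\ge 1$ and $f$ has many zeros of order $\ge k+1$. At such a zero of order $s$ (away from zeros and poles of the coefficients), $P[f]$ vanishes to order at least $s\,\underline{d}(P)-\mu$, while the contribution of that point to $N(r,0;G)$ is only $\max\left(0,\,\mathrm{ord}(P[f])-s\,\ol{d}(P)\right)$; so at least $s\,\underline{d}(P)-\mu$ units of multiplicity per point are lost, and they cannot be "matched against Lemma \ref{l2.5}" either, since that lemma only supplies the pole order of $G$, which is at most $s(\ol{d}(P)-\underline{d}(P))+\mu$ — for homogeneous $P$, just $\mu$. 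Concretely, take $f(z)=\sin^{2}z$, $P[f]=ff'$ (so $\ol{d}(P)=\underline{d}(P)=2$, $\mu=1$, $k=1$) and, say, $Q[f]=f'$. Then $G=f'/f=2\cot z$, so $N(r,0;G)\sim\frac{2r}{\pi}$, while $P[f]=\sin^{2}z\,\sin 2z$ has triple zeros at every $n\pi$ and $N(r,0;P[f])\sim\frac{8r}{\pi}$: the discrepancy is comparable to $T(r,f)\sim\frac{4r}{\pi}$, not $S(r,f)$. What the factorization $1/P[f]=(1/G)\,f^{-\ol{d}(P)}$ honestly gives is $N(r,0;P[f])\le N(r,0;G)+\ol{d}(P)N(r,0;f)$, and the second term cannot be discarded.

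You are, however, in good company: that correct inequality is precisely the first line of the paper's own proof, which then silently drops the term $\ol{d}(P)N(r,0;f)$ in passing to the next display. As a consequence, \emph{Lemma \ref{l2.8}} as printed is itself false: in the example above its right-hand side collapses to $\mu\,\ol N(r,0;f\mid\ge 2)+S(r,f)\sim\frac{2r}{\pi}$, which does not majorize $N(r,0;P[f])\sim\frac{8r}{\pi}$. Both your argument and the paper's become correct if $\ol{d}(P)N(r,0;f)$ is added to the conclusion, and this repair is harmless in the lemma's only application (Subcase 2.2 of the proof of \emph{Theorem \ref{t1}}): there each monomial of $Q[f]$ contains a power of $f$, so every zero of $f$ is a zero of $Q[f]$, and zeros of $Q[f]$ are $S(r,f)$ by (\ref{e3.12})--(\ref{e3.13}), so the added term is $S(r,f)$ in that context. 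In short, your instinct that the opening reduction is "where the genuine work lies" was correct; the trouble is that, as you (and the lemma) state it, that work is impossible.
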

\begin{proof} For a fixed value of $r$, let $E_{1}=\{\theta \in [0,2\pi]: \left|f(re^{i\theta })\right|\leq 1 \}$ and  $E_{2}$ be its complement. Since by definition $$\sum\limits_{i=0}^{k} n_{ij}\geq \underline {d}(Q),$$ for every $j=1,2,\ldots,l$, it follows that on $E_{1}$
$$\left|\frac{Q[f]}{f^{\underline{d}(Q)}}\right| \leq \sum\limits_{j=1}^{l}\left| c_{j}(z)\right| \prod\limits_{i=1}^{k}\left| \frac{f^{(i)}}{f} \right|^{n_{ij}} \left|f\right|^{^{\sum\limits_{i=0}^{k}n_{}{ij}-\underline{d}(Q)}}\leq \sum\limits_{j=1}^{l}\left| c_{j}(z)\right| \prod\limits_{i=1}^{k}\left| \frac{f^{(i)}}{f} \right|^{n_{ij}}.$$
Also we note that $$\frac{1}{f^{\underline{d}(Q)}}=\frac{Q[f]}{f^{\underline{d}(Q)}}\;\frac{1}{Q[f]}.$$
Since on $E_{2}$, $\frac{1}{\left|f(z)\right|}< 1$, we have
\beas && \underline{d}(Q)m\left(r,\frac{1}{f}\right) \\&=& \frac{1}{2\pi}\int\limits_{E_{1}}\log ^{+}\frac{1}{\left|f(re^{i\theta })\right|^{\underline{d}(Q)}}d\theta+\frac{1}{2\pi}\int\limits_{E_{2}}\log ^{+}\frac{1}{\left|f(re^{i\theta })\right|^{\underline{d}(Q)}}d\theta\\ &\leq&\frac{1}{2\pi}\sum\limits_{j=1}^{l}\left[\int\limits_{E_{1}}\log ^{+}\left|c_{j}(z)\right|d\theta+\sum\limits_{i=1}^{k}\int\limits_{E_{1}}\log ^{+}\left|\frac{f^{(i)}}{f}\right|^{n_{ij}}d\theta\right]+\frac{1}{2\pi}\int\limits_{E_{1}}\log ^{+}\left|\frac{1}{Q[f(re^{i\theta})]}\right|d\theta\\&\leq &\frac{1}{2\pi}\int\limits_{0}^{2\pi }\log ^{+}\left|\frac{1}{Q[f(re^{i\theta})]}\right|d\theta+S(r,f)=m\left(r,\frac{1}{Q[f]}\right)+S(r,f).\eeas
So using {\it Lemmas {\ref{l2.4}}}, {\it \ref{l2.5}} and the first fundamental theorem we get \beas && N(r,0;P[f])\\&\leq& N\left(r,\infty;\frac{f^{\overline{d}(P)}}{P[f]}\right)+\overline{d}(P)N(r,0;f)\\&\leq& m\left(r,\frac{P[f]}{f^{\overline{d}(P)}}\right)+N\left(r,\infty;\frac{P[f]}{f^{\overline{d}(P)}}\right)+\overline{d}(P)N(r,0;f)+S(r,f)\\&\leq& (\ol {d}(P)-\underline {d}(P))m\left(r,\frac{1}{f}\right)+(\Gamma _{P}-\ol {d}(P))\;\ol N(r,\infty;f)+(\ol {d}(P)-\underline {d} (P))\; N(r,0;f\mid\geq k+1)\\&&+\mu \ol N(r,0;f\mid\geq k+1)+\ol {d}(P) N(r,0;f\mid\leq k)+S(r,f)\\&\leq& \frac{(\ol {d}(P)-\underline {d}(P))}{\underline {d}(Q)}m\left(r,\frac{1}{Q[f]}\right)+(\Gamma _{P}-\ol {d}(P))\;\ol N(r,\infty;f)+(\ol {d}(P)-\underline {d} (P))\; N(r,0;f\mid\geq k+1)\\&&+\mu \ol N(r,0;f\mid\geq k+1)+\ol {d}(P) N(r,0;f\mid\leq k)+S(r,f). \eeas\end{proof}
\section {Proof of the theorem}
\begin{proof} [Proof of Theorem \ref{t1}] Let $F=\frac{P[f]}{a}$ and $G=\frac{Q[f]}{a}$. Then $F-1=\frac{P[f]-a}{a}$, $G-1=\frac{Q[f]-a}{a}$. Since $\ol E_{m)}(a,P[f])=\ol E_{m)}(a,Q[f])$, it follows that $\ol E_{m)}(1,F)=\ol E_{m)}(1,G)$ except the zeros and poles of $a(z)$. Now we consider the following cases.\\
{\bf Case 1} Let $H\not\equiv 0$.\par
Let $z_{0}$ be a simple zero of $F-1$. Then by a simple calculation we see that $z_{0}$ is a zero of $H$ and hence \be\label{e3.2}N^{1)}_{E}(r,1;F)=N^{1)}_{E}(r,1;G)\leq N(r,0;H)\leq N(r,\infty;H)+S(r,F)\ee
Using (\ref{e3.2}), {\it Lemmas \ref{l2.1a}}, {\it \ref{l2.7}} and noting that $\ol N(r,\infty;F)=\ol N(r,\infty;G)+S(r,f)=\ol N(r,\infty;f)+S(r,f)$ and $\ol N_{F>1}(r,1;G)+\ol N(r,1;G\mid\geq 2)=\ol N_{E}^{(2}(r,1;G)+\ol N_{L}(r,1;G)+\ol N_{L}(r,1;F)+\ol N_{G\geq m+1}(r,1;G\mid\;F\not=1)+S(r,f)$, we get from the second fundamental theorem that
\bea\label{e3.3}&& T(r,G)\\&\leq& \ol N(r,\infty;G)+\ol N(r,0;G)+N^{1)}_{E}(r,1;G)+\ol N_{F>1}(r,1;G)+\ol N(r,1;G\mid\geq 2)\nonumber\\&&-N_{0}(r,0;G^{'})+S(r,G)\nonumber\\&\leq& 2\ol N(r,\infty;F)+\ol N(r,0;G)+\ol N(r,0;G\mid\geq 2)+\ol N(r,0;F\mid\geq 2)+2\ol N_{L}(r,1;F)\nonumber\\& &+2\ol N_{L}(r,1;G)+\ol N_{F\geq m+1}(r,1;F\mid\;G\not=1)+2\ol N_{G\geq m+1}(r,1;G\mid\;F\not=1)\nonumber\\& &+\ol N_{E}^{(2}(r,1;G)+\ol N_{0}(r,0;F^{'})+S(r,f).\nonumber\eea
Using {\it Lemmas \ref{l2.1}}, {\it \ref{l2.2}} we see that
\bea \label{e3.4}&& \ol N(r,0;G\mid \geq 2)+2\ol N_{G\geq m+1}(r,1;G\mid\;F\not=1)+2\ol N_{L}(r,1;G)+\ol N_{E}^{(2}(r,1;G)\\&\leq& \ol N(r,0;G^{'}\mid G\not=0)+\ol N(r,0;G^{'})+S(r,f)\nonumber\\&\leq& 2\ol N(r,\infty;f)+\ol N(r,0;Q[f])+N_{2}(r,0;Q[f])+S(r,f)\nonumber\eea
and \bea \label{e3.5}&& \ol N(r,0;F\mid\geq 2)+\ol N_{F\geq m+1}(r,1;F\mid\;G\not=1)+2\ol N_{L}(r,1;F)+\ol N_{0}(r,0;F^{'})\\&\leq& \ol N(r,0;F^{'}\mid F\not=0)+\ol N(r,0;F^{'})+S(r,f)\nonumber\\&\leq& \ol N(r,0;(P[f]/a)^{'}\mid (P[f]/a)\not=0)+\ol N(r,0;(P[f]/a)^{'})+S(r,f)\nonumber\eea
Using (\ref{e3.4}) and (\ref{e3.5}) in (\ref{e3.2}) we have \beas T\left(r,Q[f]\right)&\leq& 4\ol N(r,\infty;f)+2\ol N\left(r,0;Q[f]\right)+N_{2}\left(r,0;Q[f]\right)+ \ol N\left(r,0;(P[f]/a)^{'}\right)\\&&+\ol N\left(r,0;(P[f]/a)^{'}\mid (P[f]/a)\not=0\right)+S(r,f).\eeas
This contradicts (\ref{e1.9}).\\
 {\bf Case 2} Let $H\equiv 0$.\\ Suppose $F=P[f]/a$ and $G=Q[f]/a$. On integration we get from  \be\label{e3.8}\frac{1}{F-1}\equiv\frac{C}{G-1}+D,\ee where $C$, $D$ are constants and $C\not=0$. From (\ref{e3.8}) it is clear that $F$ and $G$ share $1$ CM.
We first assume that $D\not=0$. Then by (\ref{e3.8}) we get \be\label{e3.8a}\ol N(r,\infty;f)=S(r,f).\ee \par
Clearly $\ol N(r,\infty;G)=\ol N(r,\infty;f)+S(r,f)=S(r,f)$.\par
From (\ref{e3.8}) we get \be\label{e3.9}\frac{1}{F-1}=\frac{D\left(G-1+\frac{C}{D}\right)}{G-1}\ee
Clearly from (\ref{e3.9}) we have \be\label{e3.10}\ol N\left(r,1-\frac{C}{D};G\right)=\ol N(r,\infty;F)=\ol N(r,\infty;G)=S(r,f).\ee

If $\frac{C}{D}\not=1$, by the second fundamental theorem, {\it Lemma \ref{l2.7}} and (\ref{e3.10}) we have \beas T(r,G)&\leq& \ol N(r,\infty;G)+\ol N(r,0;G)+\ol N\left(r,1-\frac{C}{D};G\right)+S(r,G)\\&\leq&\ol N(r,0;G)+S(r,f)\leq N_{2}(r,0;G)+S(r,f)\\&\leq& T(r,G)+S(r,f).\eeas
So $T(r,G)= N_{2}(r,0;G)+S(r,f)$ that is, $T\left(r,Q[f]\right)=N_{2}\left(r,0;Q[f]\right)+S(r,f)$, which contradicts (\ref{e1.9}).\par
If $\frac{C}{D}=1$ we get from (\ref{e3.8}) \bea\label{e3.12} \left(F-1-\frac{1}{C}\right)G\equiv -\frac{1}{C}.\eea
i.e., $$P[f]Q[f]-aQ(1+d)\equiv -da^{2},$$ for a non zero constant $d=\frac{1}{C}\in \mathbb{C}$.
From (\ref{e3.12}) it follows that \be \label{e3.13} N(r,0;f\mid \geq k+1)\leq N(r,0;Q[f])\leq N(r,0;G)\leq N(r,0;a)=S(r,f).\ee
\par When $P[f]=b_{1}f^{n}+b_{2}f^{n-1}+b_{3}f^{n-2}+\ldots+b_{t-1}f$, we see from (\ref{e3.12}) that $$\frac{1}{f^{\ol {d}(Q)}\left(P[f]-(1+1/C)a\right)}\equiv -\;\frac{C}{a^{2}}\;\;\frac{Q[f]}{f^{\ol {d}(Q)}}.$$
Hence by the first fundamental theorem, (\ref{e3.8a}), (\ref{e3.13}), {\it Lemmas \ref{l2.3}}, {\it \ref{l2.4}} and {\it \ref{l2.5}} we get that
\bea\label{e3.14} &&(n+\ol {d}(Q))T(r,f)\\&=&T\left(r,f^{\ol {d}(Q)}(P[f]-(1+\frac{1}{C})a)\right)+S(r,f)\nonumber\\&=& T\left(r,\frac{1}{f^{\ol {d}(Q)}(P[f]-(1+\frac{1}{C})a)}\right)+S(r,f)\nonumber\\&=&T\left(r,\frac{Q[f]}{f^{\ol {d}(Q)}}\right)+S(r,f)\nonumber\\&\leq& m\left(r,\frac{Q[f]}{f^{\ol {d}(Q)}}\right)+N\left(r,\frac{Q[f]}{f^{\ol {d}(Q)}}\right)+S(r,f)
\nonumber\\&\leq& (\ol {d}(Q)-\underline {d}(Q)) \left[T(r,f)-\{N(r,0;f\mid\leq k)+N(r,0;f\mid \geq k+1)\}\right]+(\ol {d}(Q)-\underline {d}(Q))\nonumber\\&& N(r,0;f\mid\geq k+1)+\mu \;\ol N(r,0;f\mid\geq k+1)+\ol {d}(Q)N(r,0;f\leq k)+S(r,f)\nonumber\\&\leq& (\ol {d}(Q)-\underline {d}(Q)) T(r,f)+\underline {d}(Q) N(r,0;f\mid\leq k)+S(r,f).\nonumber\eea
From (\ref {e3.14}) it follows that \beas nT(r,f)\leq S(r,f),\eeas which is absurd.\\
If $P[f]$ is a differential polynomial then we consider the following two subcases.\\
{\bf Subcase 2.1.}\par
If $C=-1$ then from (\ref{e3.8}) we get $FG\equiv 1$, i.e., $P[f] Q[f]\equiv a^{2}$. It is clear that $\ol N(r,\infty;P[f])=\ol N(r,\infty;Q[f])=S(r,f)$.  \par
First we observe that since each monomial of $Q[f]$ contains a term involving a power of $f$, we have $N(r,0;f)=S(r,f)$. So from the first fundamental theorem, {\it Lemma \ref{l2.4}} and noting that $m\left(r,\frac{1}{f}\right)\leq \frac{1}{\underline {d}(Q)} m(r,\frac{1}{Q[f]}))$ we have
\beas T(r,Q[f])&\leq& T(r,P[f])+S(r,f)\\&\leq& m(r,\frac{P[f]}{f^{\ol {d}(P)}})+\ol {d}(P) m(r,f)+S(r,f)\\&\leq& (\ol {d}(P)-\underline {d}(P))m(r,\frac{1}{f})+\ol {d}(P) m(r,f)+S(r,f)\\&\leq&\frac{(\ol {d}(P)-\underline {d}(P))}{\underline {d}(Q)} m(r,\frac{1}{Q[f]})+\ol {d}(P)\{m(r,\frac{1}{f})+N(r,0;f)\}+S(r,f)\\&\leq& \frac{(\ol {d}(P)-\underline {d}(P))}{\underline {d}(Q)} m(r,\frac{1}{Q[f]})+\frac{\ol {d}(P)}{\underline {d}(Q)}m(r,\frac{1}{Q[f]})+S(r,f),
\eeas which is a contradiction as $\underline  {d}(Q) >2\ol {d}(P)-\underline {d}(P)$.\\
{\bf Subcase 2.2.}\par
Next we assume $C\not =-1$. \par
Then from (\ref{e3.12}) we have $$\ol N(r,1+\frac{1}{C};F)=\ol N(r,\infty;G)=S(r,f).$$\\
So again noticing the fact that each monomial of $Q[f]$ contains a term involving a power of $f$, by the second fundamental theorem, {\it Lemma \ref{l2.8}} we get
\bea\label{e3.15}&& T(r,P[f])\\&\leq& \ol N(r,\infty;F)+\ol N(r,0;F)+\ol N(r,1+\frac{1}{C};F)+S(r,f)\nonumber\\&\leq& N(r,0;P[f])+S(r,f)\nonumber\\&\leq &\frac{\ol {d}(P)-\underline {d}(P)}{\underline {d}(Q)} T(r,P[f])+S(r,f),\nonumber\eea
i.e., \be\label{e3.16} \frac{\underline{d}(Q)+\underline{d}(P)-\ol {d}(P)}{\underline{d}(Q)}T(r,P[f])\leq S(r,f).\ee  Since by the given condition $\underline{d}(Q)>2\ol {d}(P)-\underline{d}(P)>\ol {d}(P)-\underline{d}(P)$ (\ref{e3.16}) leads to a contradiction.\\
 Hence $D=0$ and so $\frac{G-1}{F-1}=C$ or $\frac{Q[f]-a}{P[f]-a}=C$. This proves the theorem. \end{proof}
\section{Concluding Remark and an Open Question}
From the statement of {\it Theorem \ref{t1}} one can see that when (ii) happens one can not obtain the conclusion of Br\"{u}ck conjecture as a special case.
We also see from (\ref{e3.8a}) that if $\ol N(r,\infty;f)\not=S(r,f)$ then conclusion of Br\"{u}ck conjecture is satisfied for any two arbitrary differential polynomials $P[f]$ and $Q[f]$ where $Q[f]$ contains at least one derivative. The problem arises for those class of meromorphic functions whose poles are relatively small in numbers such as entire functions and thus poles have a vital contributions in this perspective.
We point out that the counter examples (\ref {ex1.9})-(\ref {ex1.13}), which demonstrate the indispensability of the conditions in (ii), have also been formed for entire functions.
So the following question still remain open for further investigations. \\
{\it Can Br\"{u}ck type conclusion be solely obtained for two arbitrary differential polynomials $P[f]$ and $Q[f]$ generated by the class of meromorphic functions containing relatively small number of poles sharing a small function $a\equiv a(z)$ ($\not\equiv 0,\infty$) IM  }?
\begin{center} {\bf Acknowledgement} \end{center}
This research work is supported by the Council Of Scientific and Industrial Research, Extramural
Research Division, CSIR Complex, Pusa, New Delhi-110012, India, under the sanction project no. 25(0229)/14/EMR-II.

\end{document}